\newtheorem{thm}{Theorem}[section]
\newtheorem{lem}{Lemma}[section]
\theoremstyle{definition}
\theoremstyle{remark}
\numberwithin{equation}{section}
\numberwithin{equation}{section}
\newcounter{saveeqn}
\newcommand{\R}{{\mathbb R}}
\newcommand{\C}{{\mathbb C}}
\newcommand{\be}{\begin{eqnarray}}
\newcommand{\ben}{\begin{eqnarray*}}
\newcommand{\en}{\end{eqnarray}}
\newcommand{\enn}{\end{eqnarray*}}
\newcommand{\ba}{\backslash}
\newcommand{\pa}{\partial}
\newcommand{\ov}{\overline}
\newcommand{\Om}{\Omega}
\newcommand{\om}{\omega}
\newcommand{\la}{\lambda}
\newcommand{\hth}{\hat{\theta}}
\newcommand{\hx}{\hat{x}}
\title[On a novel wave imaging scheme]{On a novel inverse scattering scheme using resonant modes with enhanced imaging resolution}
\author{Hongyu Liu}
\address{Department of Mathematics, Hong Kong Baptist University, Kowloon, Hong Kong SAR}
\email{hongyu.liuip@gmail.com}
\author{Xiaodong Liu}
\address{Institute of Applied Mathematics, Academy of Mathematics and Systems Science, Chinese Academy of Sciences, 100190 Beijing, China.}
\email{xdliu@amt.ac.cn}
\author{Xianchao Wang}
\address{Department of Mathematics, Harbin Institute of Technology, Harbin}
\email{xcwang90@gmail.com}
\author{Yuliang Wang}
\address{Department of Mathematics, Hong Kong Baptist University, Kowloon, Hong Kong SAR}
\email{jadelightking@qq.com}
\date{} 
\begin{document}

\begin{abstract}

We develop a novel wave imaging scheme for reconstructing the shape of an inhomogeneous scatterer and we consider the inverse acoustic obstacle scattering problem as a prototype model for our study. There exists a wealth of reconstruction methods for the inverse obstacle scattering problem and many of them intentionally avoid the interior resonant modes. Indeed, the occurrence of the interior resonance may cause the failure of the corresponding reconstruction. However, based on the observation that the interior resonant modes actually carry the geometrical information of the underlying obstacle, we propose an inverse scattering scheme of using those resonant modes for the reconstruction. To that end, we first develop a numerical procedure in determining the interior eigenvalues associated with an unknown obstacle from its far-field data based on the validity of the factorization method. Then we propose two efficient optimization methods in further determining the corresponding eigenfunctions. Using the afore-determined interior resonant modes, we show that the shape of the underlying obstacle can be effectively recovered. Moreover, the reconstruction yields enhanced imaging resolution, especially for the concave part of the obstacle. We provide rigorous theoretical justifications for the proposed method. Numerical examples in 2D and 3D verify the theoretically predicted effectiveness and efficiency of the method.

\medskip

\noindent{\bf Keywords:}~~

\noindent{\bf 2010 Mathematics Subject Classification:}~~35Q60, 35J05, 31B10, 35R30, 78A40

\end{abstract}

\maketitle

\section{Introduction}

This paper is concerned with imaging the shape of an unknown/inaccessible object from the associated wave probing data. This type of problem arises in a variety of important practical applications including radar/sonar, medical imaging, geophysical exploration and nondestructive testing. We aim to develop a novel shape reconstruction scheme with enhanced imaging resolution in certain scenarios of practical interest. To that end, we consider the inverse acoustic obstacle scattering problem as a prototype model for our study. Next, we first briefly describe the inverse acoustic obstacle problem.

Let $k=\om/c\in\mathbb{R}_+$ be the wavenumber of
a time harmonic wave with $\om\in\mathbb{R}_+$ and $c\in\mathbb{R}_+$, respectively, signifying the frequency and sound
speed. Let $\Om\subset\R^n (n=2,\, 3)$ be a bounded domain with a
Lipschitz-boundary $\pa\Om$ and a connected complement $\R^n\ba\ov{\Om}$.
Furthermore, let the incident field $u^i$ be a plane wave of the form
\ben\label{incidenwave}
u^i: =\ u^i(x,\hth,k) = e^{\mathrm{i}kx\cdot \hth},\quad x\in\R^n\,,
\enn
where $\mathrm{i}=\sqrt{-1}$ is the imaginary unit, $\hth\in \mathbb{S}^{n-1}$ denotes the direction of the incident wave and $\mathbb{S}^{n-1}:=\{x\in\R^n:|x|=1\}$ is the unit sphere in $\R^n$. Physically, $\Omega$ is an impenetrable obstacle that is assumed to be unknown or inaccessible, and $u^i$ signifies the detecting wave field that is used for probing the obstacle. The presence of the obstacle interrupts the propagation of the incident wave, leading to the so-called scattered wave field $u^s$. Let $u:=u^i+u^s$ signify the total wave field. The forward scattering problem is described by the following Helmholtz system,
\be\label{HemEquobstacle}
\begin{cases}
& \Delta u + k^2 u = 0\qquad\quad \mbox{in }\ \ \R^n\ba\ov{\Om},\medskip\\
& u =u^i+u^s= 0\hspace*{0.78cm}\mbox{on }\ \ \pa\Om,\medskip\\
&\displaystyle{ \lim_{r\rightarrow\infty}r^{\frac{n-1}{2}}\left(\frac{\pa u^{s}}{\pa r}-\mathrm{i}ku^{s}\right) =\,0,}
\end{cases}
\en
where $r:=|x|$.
The Dirichlet boundary condition in \eqref{HemEquobstacle} signifies that $\Omega$ is a sound-soft obstacle and the last limit is the Sommerfeld radiating condition which holds uniformly in $\hx:=x/|x|\in \mathbb{S}^{n-1}$ and characterizes the outgoing nature of the scattered wave field $u^s$.
The well-posedness of the scattering system \eqref{HemEquobstacle} is known \cite{CK, Mclean}. In particular, there exists a unique solution $u\in H^1_{loc}(\mathbb{R}^n\backslash\overline{D})$ and the scattered field admits the following asymptotic expansion,
\be\label{0asyrep}
u^s(x,\hth,k)
=\frac{e^{\mathrm{i}\frac{\pi}{4}}}{\sqrt{8k\pi}}\left(e^{-\mathrm{i}\frac{\pi}{4}}\sqrt{\frac{k}{2\pi}}\right)^{n-2}\frac{e^{\mathrm{i}kr}}{r^{\frac{n-1}{2}}}
   \left\{u^{\infty}(\hat{x},\hth,k)+\mathcal{O}\left(\frac{1}{r}\right)\right\}\quad\mbox{as }\,r\rightarrow\infty
\en
which holds uniformly with respect to all directions $\hx:=x/|x|\in \mathbb{S}^{n-1}$.
The complex valued function $u^\infty$ in \eqref{0asyrep} defined over the unit sphere $\mathbb{S}^{n-1}$
is known as the scattering amplitude or far-field pattern with $\hat{x}\in \mathbb{S}^{n-1}$ signifying the observation direction. The inverse obstacle scattering problem is to recover $\Omega$ by knowledge of the scattering amplitude $u^{\infty}(\hat{x},\hth,k)$ for
$\hat{x},\hth\in \mathbb{S}^{n-1}$ and $k$ in a certain open interval, namely,
\begin{equation}\label{eq:ip1}
u^{\infty}(\hat{x},\hth,k)\rightarrow \Omega,\quad (\hat x, \hth, k)\in \mathbb{S}^{n-1}\times\mathbb{S}^{n-1}\times I,
\end{equation}
where $I$ is an open interval in $\mathbb{R}_+$.

The inverse acoustic obstacle problem is a prototype model for many wave probing problems of practical importance as mentioned earlier. There exists a wealth of reconstruction methods developed in the literature including optimization-based methods, iterative methods and sampling-type methods; see \cite{AmmariGaraponJouveKangLimYu, AmmariGarnierKangLimSlna,BK, CC, CK, CCHuang, ColtonKirsch, Kirsch98, KirschGrinberg, LiLiuZou,LLZ1,LLZ2, LiuIP17, Pott, Sun2012IP} and the references therein for these methods and some other related developments. It is known that the linear sampling method \cite{ColtonKirsch} and the factorization method \cite{KirschGrinberg} only succeed at wavenumbers which are not Dirichlet Laplacian eigenvalues in $\Omega$. Hence, in order to achieve satisfactory shape reconstructions, one needs to avoid those resonant wavenumbers. However, in this paper, we show that those longly overlooked interior modes may produce even enhanced shape reconstructions in certain scenarios of practical interest. Indeed, we develop an inverse scattering scheme of reconstructing the shape of an obstacle by its far-field data via the use of the associated interior resonant modes. To that end, we first develop a numerical procedure in determining the interior eigenvalues of the obstacle from its multiple-frequency far-field data. This is achieved by proving an interesting and distinct characterization of the performance of the factorization method which connects the validity of the method to the occurrence of the interior resonance. We are aware of some recent studies of determining the interior eigenvalues with the help of the linear sampling method \cite{CakoniColtonhaddar, LiuSunIP14}. Nevertheless, the use of the factorization method can provide a more solid foundation for such a numerical determination procedure. After the determination of the interior eigenvalues, we then proceed to develop a numerical procedure in further recovering the corresponding interior eigenfunctions. This is done by solving a constrained optimization problem associated with the far-field equation by using the same set of far-field data in the first step.  We develop two efficient schemes in solving the aforementioned optimization problem, respectively, referred to as the Fourier-total-least-squares method and the gradient-total-least-squares method. Finally, using the obtained interior eigenfunctions, we design a certain imaging functional whose quantitative behaviour can be use to yield the reconstruction of the shape of the unknown obstacle.

Several remarks of the newly proposed inverse scattering scheme are in order. First, reconstructing the shape of the unknown obstacle by using the interior eigenfunctions can be viewed as probing the obstacle from its interior. This is in sharp difference from most of the existing inverse scattering schemes where the probing of the obstacle is conducted from its exterior. For the shape reconstruction from the exterior, it is a notorious fact that the reconstruction of the concave part of the obstacle usually deteriorates due to the multiple scattering and the ill-posedness of the inverse scattering problem. Intuitively, seeing from the interior, the concave part of the obstacle observed from the exterior becomes convex, and hence one may expect that using the interior resonant modes, the reconstruction of the concave part of the obstacle can be improved. This is confirmed in our numerical examples. Second, recovering both the interior eigenvalues and the interior eigenfunctions requires the knowledge from multiple-frequency far-field data. This is different from most of the existing methods which are also applicable with the far-field data at a single frequency. This point can be unobjectionably justified in the following aspects. On the one hand, in many practical scenarios, instead of the frequency-domain data, it is usually the time-domain data that are available (cf. \cite{G1,G2}). In such a case, by applying the Fourier transform to the collected time-domain data, one can actually obtain the desired multiple-frequency data. On the other hand, as also mentioned earlier, one of the main purposes of this work is to show the usefulness of the longly overlooked interior resonant modes for the inverse scattering problems. In certain applications, one may combine our method with the other methods for hybridization of respective merits. Finally, we would like to point out that we only consider the inverse acoustic scattering problem from sound-soft obstacles for the development of the novel imaging scheme, and there are several extensions for the future investigation. It can be extended to deal with more complicated obstacles such as sound-hard ones or mixed-type ones. The method can also be extended to deal with inverse medium scattering problems or inverse electromagnetic scattering problems.

The rest of the paper is organized as follows. In the next section, we briefly go through the factorization method which shall be needed in our subsequent study.
We then proceed in Subsection \ref{EigDetermination} to describe a novel interior Dirichlet eigenvalue reconstruction method based on the validity of the factorization method. After that, we introduce in Subsection \ref{EigenfunctionDetermination} an effective numerical method for the interior Dirichlet eigenfunction reconstruction.
Based on these results, a novel imaging method for the scatterer reconstruction based on the interior resonant modes is proposed Subsection \ref{ImagingResonantmode}. The eigenfunction reconstruction plays an important role in the scatterer reconstruction. To get satisfactory reconstructions, we reformulate the eigenfunction reconstruction algorithm into an optimization problem. Two optimization techniques will be introduced and analyzed in Section \ref{NumericalMethod}. Finally, in Section \ref{NumericalExamples}, we present numerical results for some benchmark problems in both two and three dimensions.

\section{Preliminary knowledge on the factorization method}
\label{sec:2}

In this section, we briefly discuss the factorization method for the inverse obstacle scattering problem \eqref{eq:ip1}. We mainly present some pertinent results that shall be needed in our subsequent study and we refer to \cite{KirschGrinberg} for more detailed study.

The factorization method is a sampling-type method and its core is an indicator function whose quantitative behaviour can be used to decide whether a given sampling point belongs to the interior or the exterior of the obstacle. To introduce the indicator function, for any point $z\in\R^n$, we define $\phi_z\in L^2(\mathbb{S}^{n-1})$ by
\be\label{eq:phiz}
\phi_z(\hat{x})=e^{-\mathrm{i}kz\cdot\hat{x}},\quad\hat{x}\in \mathbb{S}^{n-1}.
\en
The essential ingredient of the factorization method is to look for a solution $g_z\in L^2(\mathbb{S}^{n-1})$ of the following integral equation
\be\label{FMequation}
(F^{\ast}_{k}F_k)^{1/4}g_z = \phi_z,
\en
where the far-field operator $F_{k}:L^2(\mathbb{S}^{n-1})\rightarrow L^2(\mathbb{S}^{n-1})$ is defined by
\be\label{ffoperator}
(F_{k}g)(\hx)=\int_{\mathbb{S}^{n-1}}u^{\infty}(\hx,\hth,k)\,g(\hth)\,ds(\hth)\,,\quad \hx\in\,\mathbb{S}^{n-1}\,.
\en
The mathematical basis of the factorization method is summarized in the following theorem \cite{Kirsch98}.
\begin{thm}\label{BasisOfFM}
Assume that $k^2$ is not a Dirichlet eigenvalue of  $-\Delta$ in $\Om$. Then
\ben
z\in \Om\quad \mbox{if and only if}\quad \eqref{FMequation}\,\mbox{ is solvable}.
\enn
\end{thm}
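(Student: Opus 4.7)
The plan is to invoke Kirsch's celebrated factorization of the far-field operator together with his range identity. First, I would introduce the data-to-pattern operator $G\colon H^{1/2}(\partial\Omega)\to L^2(\mathbb{S}^{n-1})$ that sends Dirichlet data $f$ on $\partial\Omega$ to the far-field pattern $v^\infty$ of the unique radiating solution $v$ of the exterior Helmholtz problem with $v|_{\partial\Omega}=f$. A standard calculation using a single-layer ansatz and reciprocity yields the factorization
\[
F_k \;=\; -\,G\, S_k^{*}\, G^{*},
\]
where $S_k\colon H^{-1/2}(\partial\Omega)\to H^{1/2}(\partial\Omega)$ is the single-layer boundary operator at wavenumber $k$, which under the assumption that $k^2$ is not a Dirichlet eigenvalue of $-\Delta$ in $\Omega$ is an isomorphism.

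Next, the key geometric step is the range characterization of $G$: one shows that $\phi_z \in \mathrm{Range}(G)$ if and only if $z\in\Omega$. The forward direction is direct: for $z\in\Omega$ the fundamental solution $\Phi(\cdot,z)$ is a radiating solution in $\mathbb{R}^n\setminus\overline{\Omega}$ whose far-field pattern (modulo the normalization in \eqref{0asyrep}) equals $\phi_z$, so $G\bigl(\Phi(\cdot,z)|_{\partial\Omega}\bigr)=\phi_z$. For the converse, if $z\notin\overline{\Omega}$ were to satisfy $\phi_z = Gf$ for some boundary datum $f$, then Rellich's lemma and unique continuation would force the associated radiating solution to agree with $\Phi(\cdot,z)$ outside $\Omega$, contradicting the singularity of $\Phi$ at $z$. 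The boundary case $z\in\partial\Omega$ is handled by a trace regularity argument showing that $\Phi(\cdot,z)$ cannot lie in $H^{1/2}$ near its singularity.

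The final step is to transfer this range identification of $G$ to one for $(F_k^{*}F_k)^{1/4}$. Invoking Kirsch's abstract range identity (Theorem~2.15 in \cite{KirschGrinberg}) applied to the factorization $F_k=-G S_k^{*} G^{*}$, one obtains
\[
\mathrm{Range}(G) \;=\; \mathrm{Range}\bigl((F_k^{*}F_k)^{1/4}\bigr),
\]
and combining this with the previous step gives $z\in\Omega \iff \phi_z\in\mathrm{Range}\bigl((F_k^{*}F_k)^{1/4}\bigr)$, which is precisely the solvability of \eqref{FMequation}.

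The main obstacle is verifying the hypotheses of the range identity, namely a decomposition $S_k^{*} = S_0^{*} + K$ with $S_0^{*}$ coercive on $\mathrm{Range}(G^{*})$ and $K$ compact; this is where the hypothesis that $k^2$ is not a Dirichlet eigenvalue enters crucially, guaranteeing both the injectivity of $G^{*}$ (equivalently, the denseness of $\mathrm{Range}(G)$) and the non-degeneracy of the coercive estimate obtained by comparison with the single-layer operator at imaginary wavenumber $\mathrm{i}$.
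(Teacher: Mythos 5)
The paper offers no proof of Theorem~\ref{BasisOfFM}: it is quoted as a known result with a citation to \cite{Kirsch98}. Your outline --- the factorization $F_k=-GS_k^*G^*$, the range characterization $\phi_z\in\mathrm{Range}(G)\iff z\in\Omega$, and the abstract range identity $\mathrm{Range}(G)=\mathrm{Range}\bigl((F_k^*F_k)^{1/4}\bigr)$ --- is precisely the standard argument of \cite{Kirsch98} and \cite{KirschGrinberg}, and is correct.
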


Theorem \ref{BasisOfFM} provides an accurate characterization for any point $z$ belonging to the interior of the obstacle $\Omega$ or not, which is both necessary and sufficient. Using such a characterization, one can introduce the following indicator function
\be\label{FMindicator}
I^{FM}_{k}(z):=\frac{1}{\|g_z\|_{L^2(\mathbb{S}^{n-1})}},\quad z\in\R^n.
\en
It is noted that by Theorem~\ref{BasisOfFM}, if $z\in\mathbb{R}^n\backslash\overline{\Omega}$, $g_z$ is undefined since \eqref{FMequation} is not solvable. Nevertheless, one can still use the standard least-squares method to numerically determine a "pseudo-solution". It is unobjectionable to expect that for such a numerically determined $g_z$, the indicator function defined in \eqref{FMindicator} possesses a relatively smaller value when $z\in\mathbb{R}^n\backslash\overline{\Omega}$, whereas it possesses a relatively larger value when $z\in\Omega$. In fact, this indicating behaviour has been confirmed by numerous numerical examples in the literature. The factorization method has been successfully developed to a variety of inverse scattering problems of practical importance \cite{KirschGrinberg}. According to Theorem~\ref{BasisOfFM}, the success of the factorization method critically relies on the suitable choice of the wavenumber $k$ which cannot be an interior eigenvalue. There are several strategies developed in the literature in order to avoid the interior eigenvalue problem. In \cite{KirschLiuIP14}, a modified factorization method was proposed which can avoid the interior eigenvalue problem in a certain scenario.
However, things always have two sides and in the present article, the invalidity of the factorization method when meeting the interior eigenvalues shall play a critical role for the development of the newly proposed imaging scheme. In fact, we shall make use of the ``failure'' of the factorization method to determine the associated interior eigenvalues of the unknown obstacle by its far-field data. This shall be rigorously derived in the next section.

\section{Imaging from the interior by resonant modes}

We are in a position to present the novel imaging scheme by using the interior resonant modes. Our development of the scheme shall be divided into three steps. In the first step, we show the determination of the interior eigenvalues by the associated far-field data of the unknown obstacle. In the second step, we further show the determination of the corresponding eigenfunctions based on the determined eigenvalues in the first step. Finally, in the third step, we present the imaging scheme based on the determined eigenfunctions in the second step.

\subsection{Eigenvalue determination from the far-field data}\label{EigDetermination}

In this subsection, we consider the determination of the Dirichlet Laplacian eigenvalues by given the far-field data as specified in \eqref{eq:ip1}. Throughout, it is assumed that $I$ contains at least one interior eigenvalue. We first recall the Herglotz wave function of the following form
\be\label{Herglotz}
(H_k g)(x)=v_{g}(x):=\int_{\mathbb{S}^{n-1}}e^{ikx\cdot\,d}g(d)ds(d),\quad x\in\R^n,
\en
where $g\in L^2(\mathbb{S}^{n-1})$ is called the Herglotz kernel of $v_g$. Introduce the following far-field equation,
\be\label{LSMequation}
F_{k}g = \phi_z,
\en
where $\phi_z$ is given in \eqref{eq:phiz} with $z\in\mathbb{R}$. As discussed in Section~\ref{sec:2}, we shall make use of the ``failure'' of the factorization method to determine the corresponding interior eigenvalues. To that end, we first derive a relation between the solutions to \eqref{FMequation} and \eqref{LSMequation}.

\begin{thm}\label{vgg}
For any $z\in\Omega$, suppose that $g_z\in L^2(\mathbb{S}^{n-1})$ verifies \eqref{FMequation}. For $\epsilon\in\mathbb{R}_+$, we let $g=g_{z,\epsilon}\in L^2(\mathbb{S}^{n-1})$ be the Tikhonov-approximation of \eqref{LSMequation}, i.e. $g$ is the unique solution of $(\epsilon I+F^{\ast}_{k}F_k)g = F^{\ast}_{k}\phi_z$.
Then there holds
\be\label{vg-smallthan-g}
|v_{g_{z,\epsilon}}(z)| \leq \|g_z\|_{L^2(\mathbb{S}^{n-1})},
\en
where $v_{g_{z,\epsilon}}$ is the Herglotz wave function with the kernel $g_{z,\epsilon}$.
\end{thm}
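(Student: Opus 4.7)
The plan hinges on three elementary observations: (i) the pairing identity $v_g(z)=(g,\phi_z)_{L^2(\mathbb{S}^{n-1})}$, which is immediate from the definitions \eqref{Herglotz} and \eqref{eq:phiz} upon noting that $\overline{\phi_z(\hat{x})}=e^{\mathrm{i}kz\cdot\hat{x}}$; (ii) the defining equation \eqref{FMequation}, namely $\phi_z=(F_k^{\ast}F_k)^{1/4}g_z$; and (iii) the self-adjointness and positivity of the fourth root $(F_k^{\ast}F_k)^{1/4}$.

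First, I would apply (i) at $g=g_{z,\epsilon}$, substitute (ii) for $\phi_z$, and use (iii) to transfer the fourth-root operator onto $g_{z,\epsilon}$, thereby obtaining
\[
v_{g_{z,\epsilon}}(z) = (g_{z,\epsilon},\phi_z)_{L^2(\mathbb{S}^{n-1})} = \bigl((F_k^{\ast}F_k)^{1/4}g_{z,\epsilon},\,g_z\bigr)_{L^2(\mathbb{S}^{n-1})}.
\]
A single application of Cauchy-Schwarz then yields
\[
|v_{g_{z,\epsilon}}(z)| \leq \|(F_k^{\ast}F_k)^{1/4}g_{z,\epsilon}\|_{L^2(\mathbb{S}^{n-1})} \cdot \|g_z\|_{L^2(\mathbb{S}^{n-1})},
\]
which already isolates the factor $\|g_z\|$ on the right-hand side.

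The decisive step is to control the leading factor by exploiting the Tikhonov normal equation $(\epsilon I + F_k^{\ast}F_k)g_{z,\epsilon}=F_k^{\ast}\phi_z$. Setting $B=F_k^{\ast}F_k$ and substituting $\phi_z=B^{1/4}g_z$ once more, together with the classical normality of the far-field operator $F_k$ on $L^2(\mathbb{S}^{n-1})$ for a lossless sound-soft obstacle (so that $F_k^{\ast}$ commutes with every continuous function of $B$), one arrives at the identity
\[
B^{1/4}g_{z,\epsilon} \;=\; B^{1/2}(\epsilon I + B)^{-1}F_k^{\ast}\,g_z.
\]
The operator on the right is a function of $F_k$ with spectral symbol of modulus $\sigma^2/(\epsilon+\sigma^2)$, and the bounded functional calculus combined with the elementary scalar inequality $\sup_{\sigma\ge 0}\sigma^2/(\epsilon+\sigma^2)\leq 1$ delivers the required estimate on $\|B^{1/4}g_{z,\epsilon}\|$, which feeds back into the Cauchy-Schwarz bound above to close the proof.

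The main technical hurdle is the commutation of $F_k^{\ast}$ past $(F_k^{\ast}F_k)^{1/4}$, which rests on the normality of the far-field operator---a classical but non-trivial fact equivalent to the unitarity of the scattering operator for a non-absorbing scatterer. Once this commutation is cleanly invoked, the remainder of the argument reduces to the scalar bound $\sigma^2/(\epsilon+\sigma^2)\leq 1$ and a clean application of Cauchy-Schwarz.
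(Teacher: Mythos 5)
Your argument is correct and is essentially the paper's proof recast in operator/functional-calculus language: both rest on the normality of $F_k$, the spectral form of the Tikhonov solution, the substitution $\phi_z=(F_k^{\ast}F_k)^{1/4}g_z$, and the scalar bound $\sigma^2/(\epsilon+\sigma^2)\le 1$. Note only that your chain (exactly like the paper's own final Parseval step) actually delivers $|v_{g_{z,\epsilon}}(z)|\le\|g_z\|_{L^2(\mathbb{S}^{n-1})}^2$ rather than $\|g_z\|_{L^2(\mathbb{S}^{n-1})}$, since the functional calculus gives $\|(F_k^{\ast}F_k)^{1/4}g_{z,\epsilon}\|\le\|g_z\|$ (not $\le 1$) and Cauchy--Schwarz then contributes a second factor of $\|g_z\|$.
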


\begin{proof}
We first recall that the far-field operator $F_k:L^2(\mathbb{S}^{n-1})\rightarrow L^2(\mathbb{S}^{n-1})$ is compact and normal \cite{Kirsch98, KirschGrinberg}. Therefore by the
spectral property for compact normal operators, there exists a complete set of orthonormal eigenfunctions $\psi_m\in L^2(\mathbb{S}^{n-1})$ associated to the corresponding eigenvalues $\la_m\in\C$, $m=1,2,\cdots$. In particular, the spectral property also implies the following expansions
\ben
F_{k}g=\sum_{m=1}^{\infty}\la_{m}(g,\psi_m)_{L^2(\mathbb{S}^{n-1})}\psi_m,\quad g\in L^2(\mathbb{S}^{n-1}),
\enn
and
\ben
F_{k}^{\ast}g=\sum_{m=1}^{\infty}\overline{\la_{m}}(g,\psi_m)_{L^2(\mathbb{S}^{n-1})}\psi_m,\quad g\in L^2(\mathbb{S}^{n-1}).
\enn
Consequently we obtain the Tikhonov regularized solution $g_{z,\epsilon}$ to \eqref{LSMequation} as
\ben
g_{z,\epsilon}=\sum_{m=1}^{\infty}\frac{\overline{\la_{m}}}{|\la_{m}|^2+\epsilon}(\phi_z,\psi_m)_{L^2(\mathbb{S}^{n-1})}\psi_m.
\enn
Then the value of the corresponding Herglotz wave function at $z$ is given by
\be\label{vgz}
v_{g_{z,\epsilon}}(z)=\sum_{m=1}^{\infty}\frac{\overline{\la_{m}}}{|\la_{m}|^2+\epsilon}\big|(\phi_z,\psi_m)_{L^2(\mathbb{S}^{n-1})}\big|^2.
\en
If \eqref{FMequation} is verified for $g_z\in L^2(\mathbb{S}^{n-1})$ then
\be\label{eq:ff1}
(\phi_z,\psi_m)_{L^2(\mathbb{S}^{n-1})}
&=&\big((F^{\ast}_{k}F_k)^{1/4}g_z,\psi_m\big)_{L^2(\mathbb{S}^{n-1})}\cr
&=&(g_z,(F^{\ast}_{k}F_k)^{1/4}\psi_m)_{L^2(\mathbb{S}^{n-1})}\cr
&=&\sqrt{|\la_m|}(g_z,\psi_m)_{L^2(\mathbb{S}^{n-1})}.
\en
Inserting \eqref{eq:ff1} into \eqref{vgz}, one can readily deduce that
\be\label{vgz2}
v_{g_{z,\epsilon}}(z)=\sum_{m=1}^{\infty}\frac{\overline{\la_{m}}|\la_{m}|}{|\la_{m}|^2+\epsilon}\big|(g_z,\psi_m)_{L^2(\mathbb{S}^{n-1})}\big|^2.
\en
Finally, the inequality \eqref{vg-smallthan-g} follows from \eqref{vgz2} by using the Parseval's inequality.

The proof is complete.
\end{proof}

Theorem \ref{vgg} implies that $|v_{g_{z,\epsilon}}(z)|$ is always smaller than $\|g_z\|_{L^2(\mathbb{S}^{n-1})}$. If $k^2$ is not an interior Dirichlet eigenvalue of $-\Delta$ in $\Om$, one can show that $\lim_{\epsilon\rightarrow 0}|v_{g_{z,\epsilon}}(z)|$ is equivalent to $\|g_z\|_{L^2(\mathbb{S}^{n-1})}$, see e.g. \cite{ArensLechleiter} for a proof. Next, we show the quantitative behaviour of the solution to the equation \eqref{FMequation} when $k^2$ is an interior Dirichlet eigenvalue to $-\Delta$ in $\Om$.

\begin{thm}\label{FMfails-for-eigenvaule}
Assume that $k^2$ is an interior Dirichlet eigenvalue of $-\Delta$ in $\Om$. For almost every $z\in\Om$, let the equation \eqref{FMequation} be verified. Then the norm $\|g_z\|_{L^2(\mathbb{S}^{n-1})}$ can not be bounded.
\end{thm}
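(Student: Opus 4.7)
\bigskip

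\noindent\textbf{Proof plan.}

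The plan is to extend the spectral calculation used in the proof of Theorem~\ref{vgg} and read off the norm $\|g_z\|_{L^2(\mathbb{S}^{n-1})}$ from Picard's criterion. Let $\{\psi_m\}_{m\geq 1}$ be the orthonormal eigenbasis of the compact normal operator $F_k$ with eigenvalues $\{\lambda_m\}\subset\mathbb{C}$, so that $(F_k^\ast F_k)^{1/4}$ is normal with eigenvalues $|\lambda_m|^{1/2}$ on the same basis. If $g_z$ verifies \eqref{FMequation}, then expanding both sides yields the Fourier relations $|\lambda_m|^{1/2}(g_z,\psi_m)_{L^2(\mathbb{S}^{n-1})}=(\phi_z,\psi_m)_{L^2(\mathbb{S}^{n-1})}$, together with the compatibility requirement that $(\phi_z,\psi_m)_{L^2(\mathbb{S}^{n-1})}=0$ whenever $\lambda_m=0$. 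Parseval then gives
\[
\|g_z\|_{L^2(\mathbb{S}^{n-1})}^{2}=\sum_{m\,:\,\lambda_m\neq 0}\frac{|(\phi_z,\psi_m)_{L^2(\mathbb{S}^{n-1})}|^{2}}{|\lambda_m|}.
\]

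The next step is to turn the numerator into a pointwise value of a Herglotz wave function. From the definition \eqref{eq:phiz} of $\phi_z$ and the definition \eqref{Herglotz} of $H_k$, a direct computation shows
\[
(\phi_z,\psi_m)_{L^2(\mathbb{S}^{n-1})}=\overline{v_{\psi_m}(z)},
\]
so that the previous display becomes $\|g_z\|_{L^2(\mathbb{S}^{n-1})}^{2}=\sum_{m\,:\,\lambda_m\neq 0}|v_{\psi_m}(z)|^{2}/|\lambda_m|$, and the compatibility condition becomes $v_{\psi_m}(z)=0$ for every $m$ with $\lambda_m=0$.

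The heart of the argument is to exploit the assumption that $k^{2}$ is an interior Dirichlet eigenvalue to show that this series cannot be finite for a.e.~$z\in\Om$. Using the characterisation of $\ker F_k$: if $F_k g=0$ then, by Rellich's lemma and unique continuation, the scattered field from the Herglotz incident wave $v_g$ vanishes outside $\ov{\Om}$, whence $v_g|_{\pa\Om}=0$ by the Dirichlet condition in \eqref{HemEquobstacle}, so $v_g|_\Om$ is a (possibly trivial) Dirichlet eigenfunction of $-\Delta$ at eigenvalue $k^{2}$. A perturbation/limiting version of the same argument applied to a sequence $\lambda_{m_j}\to 0$ (which exists since $F_k$ is compact) shows that the corresponding $v_{\psi_{m_j}}$ are asymptotically Dirichlet eigenfunctions of $-\Delta$ in $\Om$. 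Since such eigenfunctions are real-analytic in $\Om$ and non-trivial, their zero sets have Lebesgue measure zero. Hence for a.e.~$z\in\Om$, either $\lambda_{m}=0$ is realised and $v_{\psi_m}(z)\neq 0$, violating the compatibility condition, or the tail series $\sum_j |v_{\psi_{m_j}}(z)|^{2}/|\lambda_{m_j}|$ diverges; in either case $\|g_z\|_{L^2(\mathbb{S}^{n-1})}$ cannot be bounded.

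The main obstacle is the limiting step of the third paragraph: justifying that the Herglotz wave functions associated with the vanishing-eigenvalue tail really are concentrated (in $L^{2}(\Om)$) on the Dirichlet eigenspace at $k^{2}$, so that the non-vanishing of eigenfunctions a.e.~in $\Om$ transfers to a lower bound on $|v_{\psi_{m_j}}(z)|$ along a subsequence for a.e.~$z$. This is where compactness of $F_k$, Rellich's lemma, and the density/approximation properties of Herglotz wave functions in the Helmholtz solution space on $\Om$ must be combined carefully; the Parseval/Picard reduction in the first two paragraphs is then routine.
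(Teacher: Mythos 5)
Your reduction to the Picard series is fine: the identities $\|g_z\|_{L^2(\mathbb{S}^{n-1})}^2=\sum_{\lambda_m\neq 0}|(\phi_z,\psi_m)|^2/|\lambda_m|$ and $(\phi_z,\psi_m)_{L^2(\mathbb{S}^{n-1})}=\overline{v_{\psi_m}(z)}$ are correct, and the analysis of the branch $\lambda_m=0$ (where $v_{\psi_m}$ is a genuine, nontrivial, real-analytic Dirichlet eigenfunction, so the Picard compatibility condition fails for a.e.\ $z$) is sound. But the heart of your argument --- the divergence of the tail $\sum_j |v_{\psi_{m_j}}(z)|^2/|\lambda_{m_j}|$ along a subsequence $\lambda_{m_j}\to 0$ --- has a genuine gap, and you have in effect only deferred the entire difficulty to the step you flag as ``the main obstacle.'' The claim that the $v_{\psi_{m_j}}$ are ``asymptotically Dirichlet eigenfunctions'' that are nonzero a.e.\ cannot work as a lower bound: since $\{\psi_{m_j}\}$ is an orthonormal sequence it converges weakly to $0$ in $L^2(\mathbb{S}^{n-1})$, and the Herglotz operator $H_k$ is compact into $L^2(\Omega)$ (and point evaluation $g\mapsto v_g(z)$ is a bounded functional), so $v_{\psi_{m_j}}(z)\to 0$ for every $z$ and $\|v_{\psi_{m_j}}\|_{L^2(\Omega)}\to 0$. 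There is therefore no qualitative non-vanishing statement to transfer; what is needed is the quantitative assertion that $|v_{\psi_{m_j}}(z)|$ does not decay summably faster than $|\lambda_{m_j}|^{1/2}$, i.e.\ precisely the failure of the range identity $\phi_z\in R\bigl((F_k^{\ast}F_k)^{1/4}\bigr)$ at interior eigenvalues, which your sketch does not establish. A further structural problem: $k^2$ being a Dirichlet eigenvalue does \emph{not} imply $\ker F_k\neq\{0\}$ --- that requires the eigenfunction to be representable as a Herglotz wave function, which generically fails --- so your ``either'' branch is typically empty and the whole burden falls on the unproven tail estimate.

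For comparison, the paper avoids the spectral tail analysis entirely. It argues by contradiction: if $\|g_z\|_{L^2(\mathbb{S}^{n-1})}$ were bounded, then by Theorem~\ref{vgg} the regularized Herglotz values $v_{g_{z,\epsilon}}(z)$ would be uniformly bounded in $\epsilon$, and hence (the paper asserts) so would $\|v_{g_{z,\epsilon}}\|_{H^1(\Omega)}$; this contradicts Theorem~2.1 of \cite{CakoniColtonhaddar}, which states that at an interior Dirichlet eigenvalue $\|v_{g_{z,\epsilon}}\|_{H^1(\Omega)}$ blows up as $\epsilon\to 0$ for a.e.\ $z\in\Omega$. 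In other words, the hard analytic content is outsourced to a known blow-up result for the linear sampling method, with Theorem~\ref{vgg} serving as the bridge between $\|g_z\|$ and the regularized Herglotz waves. If you want to salvage your route, you would need to prove the divergence of the Picard series directly, which amounts to reproving (a pointwise version of) that cited blow-up result.
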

\begin{proof}
By absurdity argument, we assume on the contrary that $\|g_z\|_{L^2(\mathbb{S}^{n-1})}$ is bounded. From Theorem \ref{vgg}, we find that $v_{g_{z,\epsilon}}(z)$ is uniformly bounded with respect to the regularization parameter $\epsilon$.
By the definition \eqref{Herglotz} of the Herglotz wave function, it is readily seen that  $v_{g_{z,\epsilon}}$ is an analytic function in $\R^n$. This further implies $\|v_{g_{z,\epsilon}}\|_{H^1(\Om)}$ is uniformly bounded with respect to the regularization parameter $\epsilon$. However, under the assumption that $k^2$ is an interior Dirichlet eigenvalue of $-\Delta$ in $\Om$, for almost every $z\in\Om$, $\|v_{g_{z,\epsilon}}\|_{H^1(\Om)}$ can not be bounded as $\epsilon\rightarrow 0$ (see e.g., Theorem 2.1 in \cite{CakoniColtonhaddar}).
We thus arrive at a contradiction and $\|g_z\|_{L^2(\mathbb{S}^{n-1})}$ can not be bounded.

The proof is complete.
\end{proof}

Combining Theorems \ref{BasisOfFM} and \ref{FMfails-for-eigenvaule}, we readily have the following result for the solution of the equation \eqref{FMequation}.
\begin{thm}\label{FM-find-Eigen}
For almost every $z\in \Om$, let the equation \ref{FMequation} be verified. Then the norm $\|g_z\|_{L^2(\mathbb{S}^{n-1})}$ is bounded if $k^2$ is not an interior Dirichlet eigenvalue of $-\Delta$ in $\Om$, and unbounded otherwise.
\end{thm}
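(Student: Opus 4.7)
The plan is to recognize that Theorem \ref{FM-find-Eigen} is a straightforward consequence of the dichotomy established by the two preceding theorems; essentially no new technical machinery is required beyond bookkeeping the two cases.

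First, I would treat the case where $k^2$ is not an interior Dirichlet eigenvalue of $-\Delta$ in $\Omega$. In this regime, Theorem \ref{BasisOfFM} applies and guarantees that for every $z\in\Omega$, the equation \eqref{FMequation} admits a solution $g_z\in L^2(\mathbb{S}^{n-1})$. Since $g_z$ lies in $L^2(\mathbb{S}^{n-1})$ by construction, its norm $\|g_z\|_{L^2(\mathbb{S}^{n-1})}$ is finite, hence bounded, which is the first half of the claim.

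Next, I would treat the complementary case where $k^2$ is an interior Dirichlet eigenvalue of $-\Delta$ in $\Omega$. This is precisely the content of Theorem \ref{FMfails-for-eigenvaule}: under the hypothesis that \eqref{FMequation} is verified for almost every $z\in\Omega$, the norm $\|g_z\|_{L^2(\mathbb{S}^{n-1})}$ cannot be bounded. One invokes that result verbatim to obtain the second half of the dichotomy.

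Since the two alternatives are mutually exclusive and together exhaust the possibilities for $k^2$, the combined statement follows. The only genuinely delicate step occurs inside the proof of Theorem \ref{FMfails-for-eigenvaule} — namely, passing from the pointwise bound on $v_{g_{z,\epsilon}}(z)$ furnished by Theorem \ref{vgg} through analyticity of the Herglotz wave function to a uniform $H^1(\Omega)$ bound that can be contradicted by the known blow-up behaviour of the Tikhonov approximations at an interior eigenvalue (as cited from \cite{CakoniColtonhaddar}). That work has already been carried out, so for the present theorem the main ``obstacle'' is nothing more than assembling the two preparatory results into a single if-and-only-if statement.
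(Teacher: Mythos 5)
Your proof is correct and follows essentially the same route as the paper, which states the result simply as a combination of Theorem \ref{BasisOfFM} (solvability of \eqref{FMequation} for $z\in\Omega$ when $k^2$ is not an interior Dirichlet eigenvalue, hence a finite $L^2(\mathbb{S}^{n-1})$-norm) with Theorem \ref{FMfails-for-eigenvaule} (unboundedness when $k^2$ is an eigenvalue). The bookkeeping of the two mutually exclusive cases is exactly what the paper intends, so nothing further is needed.
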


Theorem~\ref{FM-find-Eigen} immediately yields a numerical procedure to determine the interior eigenvalues as follows. Let $z$ be any given point of $\Omega$. Here, it is noted that $\Omega$ is unknown, but with the far-field data given in \eqref{eq:ip1}, one can apply, e.g. the direct sampling method in \cite{LiuIP17}, to easily find a point inside $\Omega$. Then associated with the far-field data given in \eqref{eq:ip1} and for each $k\in I$, one can solve the discretized linear integral equation \eqref{FMequation} by the Picard's theorem. Let $g_{z}^k$ denote the corresponding numerical solution. By Theorem~\ref{FM-find-Eigen}, $\|g_{z}^k\|_{L^2(\mathbb{S}^{n-1})}$ possesses a relatively large value if $k^2$ is an interior eigenvalue of $\Omega$ and a relatively small value if $k^2$ is not an interior eigenvalue.

\subsection{Eigenfunction determination from the far-field data}\label{EigenfunctionDetermination}

After the determination of the interior eigenvalues in the previous subsection, we proceed to further determine the corresponding interior eigenfunctions.
To that purpose, we shall need the following lemma concerning an approximation property to the solution of the Helmholtz equation by using Herglotz waves \cite{Weck}.

\begin{lem}\label{HerglotzApproximation}
Denote by $\mathbb{H}(\R^n)$ the set of all Herglotz wave functions of the form \eqref{Herglotz}.
Define, respectively,
\ben
\mathbb{H}(\Om):=\{u|_{\Om}: \, u\in \mathbb{H}(\R^n)\}
\enn
and
\ben
\mathbb{U}(\Om):=\{u\in C^{\infty}(\Om): \, \Delta u+k^2 u=0 \,\,\mbox{in}\,\,\Om\}.
\enn
Then $\mathbb{H}(\Om)$ is dense $\mathbb{U}(\Om)$ with respect to $H^1(\Om)$-norm.
\end{lem}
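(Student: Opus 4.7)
The plan is to use a Hahn-Banach duality argument, reducing the density claim to the assertion that any continuous linear functional on $H^1(\Om)$ which annihilates $\mathbb{H}(\Om)$ must also annihilate $\mathbb{U}(\Om)\cap H^1(\Om)$. The heart of the proof will then be to translate the annihilation condition into a statement about a radiating Helmholtz potential with vanishing far-field pattern, and to apply Rellich's lemma together with unique continuation.

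Concretely, let $\ell:H^1(\Om)\to\C$ be a bounded linear functional with $\ell(v_g)=0$ for every $g\in L^2(\mathbb{S}^{n-1})$. By the Riesz representation theorem applied to $H^1(\Om)$, there exist $f\in L^2(\Om)$ and $h\in L^2(\Om)^n$ with
\[
\ell(u)=\int_{\Om}\overline{f(x)}\,u(x)\,dx+\int_{\Om}\overline{h(x)}\cdot \nabla u(x)\,dx \qquad \text{for all } u\in H^1(\Om).
\]
Writing out $v_g(x)=\int_{\mathbb{S}^{n-1}}g(d)e^{\mathrm{i}kx\cdot d}\,ds(d)$ and applying Fubini, the hypothesis $\ell(v_g)=0$ for every $g\in L^2(\mathbb{S}^{n-1})$ collapses to the pointwise statement $\ell(e^{\mathrm{i}kx\cdot d})=0$ for every $d\in\mathbb{S}^{n-1}$.

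I would then associate to $(f,h)$ the radiating Helmholtz potential
\[
w(y):=\int_{\Om}\overline{f(x)}\,\Phi(x,y)\,dx+\int_{\Om}\overline{h(x)}\cdot\nabla_{x}\Phi(x,y)\,dx,\qquad y\in\R^n\ba \overline{\Om},
\]
where $\Phi$ is the outgoing fundamental solution of $\Delta+k^2$. Standard regularity for volume and gradient-volume potentials gives $\Delta w+k^2 w=0$ in $\R^n\ba\overline{\Om}$ together with the Sommerfeld radiation condition. Using the large-$|y|$ asymptotics of $\Phi(x,y)$, the far-field pattern of $w$ works out to be a universal nonzero multiple of $\ell(e^{-\mathrm{i}kx\cdot\hat y})$, which vanishes by the previous step for every $\hat y\in\mathbb{S}^{n-1}$. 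Rellich's lemma combined with unique continuation then force $w\equiv 0$ in $\R^n\ba\overline{\Om}$.

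To finish, for an arbitrary $u\in\mathbb{U}(\Om)\cap H^1(\Om)$, I would apply Green's second identity to $u$ and $w$ on $\Om$ and use the jump relations for the volume and gradient-volume potentials across $\pa\Om$; since the exterior traces of $w$ and $\pa_\nu w$ both vanish, all boundary contributions drop out and the interior terms recombine precisely into the representation of $\ell(u)$, yielding $\ell(u)=0$. The main obstacle is this last step: because $h$ is merely $L^2$ and $\pa\Om$ is only Lipschitz, the jump relations for the gradient-volume potential and the integration-by-parts steps are not classically valid. I would handle this by mollifying $(f,h)$ to smooth compactly supported approximants $(f_\epsilon,h_\epsilon)$, running the Green's identity argument for these regularised data where all manipulations are legitimate, and passing to the limit $\epsilon\to 0$ using continuity of the Riesz pairing in $L^2$ and continuity of the far-field map from $L^2$ source data.
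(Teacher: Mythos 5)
The paper does not actually prove this lemma; it is quoted verbatim from the literature (Weck, \emph{Math.\ Meth.\ Appl.\ Sci.} \textbf{27} (2004), the reference \cite{Weck}), so there is no internal proof to compare against. Your duality argument is essentially the standard route to this kind of density statement and is sound in outline: Hahn--Banach reduces the claim to showing that a functional $\ell(u)=\int_\Om \overline f\,u+\int_\Om \overline h\cdot\nabla u$ annihilating all plane waves $e^{\mathrm{i}kx\cdot d}$ must annihilate every Helmholtz solution in $H^1(\Om)$; the associated potential $w$ has far-field pattern proportional to $\ell(e^{-\mathrm{i}k\hat y\cdot x})$; Rellich's lemma and unique continuation (using that $\R^n\setminus\overline\Om$ is connected, as assumed in the paper) force $w\equiv 0$ in the exterior; and Green's identity finishes. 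The reduction from $\ell(v_g)=0$ for all $g$ to $\ell(e^{\mathrm{i}kx\cdot d})=0$ for all $d$ is also fine, since $d\mapsto e^{\mathrm{i}kx\cdot d}$ is continuous into $H^1(\Om)$ and Fubini applies.

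The one step you must tighten is the final regularisation. If you replace $(f,h)$ by smooth, compactly supported $(f_\epsilon,h_\epsilon)$, the corresponding potential $w_\epsilon$ no longer has vanishing far field and hence does not vanish in the exterior, so the assertion that ``the boundary contributions drop out'' is not available at level $\epsilon$; and the ``continuity of the far-field map'' you invoke points the wrong way --- smallness of the far field does not control the Cauchy data on $\pa\Om$ (that is precisely the severely ill-posed analytic continuation problem). What actually closes the argument is the mapping property of the potentials themselves: the volume potential and the gradient-volume potential are bounded from $L^2(\Om)$ into $H^2_{loc}(\R^n)$ and $H^1_{loc}(\R^n)$ respectively, so $w_\epsilon\to w$ in $H^1$ of a neighbourhood of $\pa\Om$; since $w_\epsilon$ solves the homogeneous Helmholtz equation outside the (compact, interior) supports of $f_\epsilon$ and $h_\epsilon$, its Dirichlet and Neumann traces on $\pa\Om$ converge in $H^{1/2}(\pa\Om)\times H^{-1/2}(\pa\Om)$ to those of $w$, which vanish. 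Keeping the boundary terms in Green's identity at level $\epsilon$ and passing to the limit then yields $\ell(u)=0$; note that $\pa_\nu u\in H^{-1/2}(\pa\Om)$ is well defined because $u\in H^1(\Om)$ with $\Delta u=-k^2u\in L^2(\Om)$. With that repair your proof is complete and gives an honest, self-contained justification of a result the paper only cites.
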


Then the eigenfunction determination is based on the following theorem.

\begin{thm}\label{Fg=0solvableForEigen}
Suppose that $k^2$ is a Dirichlet eigenvalue of $-\Delta$ in $\Om$. For any sufficiently small $\epsilon\in\mathbb{R}_+$, there exists $g_{\epsilon}\in L^2(\mathbb{S}^{n-1})$ such that
\begin{equation}\label{eq:nnn1}
\|F_{k}g_{\epsilon}\|_{L^2(\mathbb{S}^{n-1})}= \mathcal{O}(\epsilon)\quad\mbox{and}\quad \|v_{g_{\epsilon}}\|_{H^1(\Omega)} \sim 1,
\end{equation}
where $F_k$ is the far-field operator defined in \eqref{ffoperator} and $v_{g_{\epsilon}}$ is the Herglotz wave function defined in \eqref{Herglotz} with the kernel $g_{\epsilon}$.
\end{thm}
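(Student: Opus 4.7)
The plan is to build $g_\epsilon$ by approximating a genuine Dirichlet eigenfunction by a Herglotz wave, and then to show that smallness of the trace on $\partial\Omega$ forces smallness of the associated far-field pattern. Concretely, since $k^2$ is assumed to be a Dirichlet eigenvalue of $-\Delta$ in $\Omega$, pick a normalized eigenfunction $u \in H^1_0(\Omega)$ satisfying $\Delta u + k^2 u = 0$ in $\Omega$, $u|_{\partial\Omega}=0$, and $\|u\|_{H^1(\Omega)}=1$. Since $u \in \mathbb{U}(\Omega)$, Lemma \ref{HerglotzApproximation} supplies, for every $\epsilon\in\mathbb{R}_+$, a kernel $g_\epsilon\in L^2(\mathbb{S}^{n-1})$ with
\begin{equation*}
\|v_{g_\epsilon}-u\|_{H^1(\Omega)} \le \epsilon .
\end{equation*}
The triangle inequality then immediately gives the second requirement in \eqref{eq:nnn1}, namely $\|v_{g_\epsilon}\|_{H^1(\Omega)}\sim 1$ for all sufficiently small $\epsilon$.

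Next I would control the far-field pattern $F_k g_\epsilon$. By linearity and superposition, $F_k g_\epsilon$ is precisely the far-field pattern of the scattered wave $u^s_\epsilon$ generated by the Herglotz incident field $v_{g_\epsilon}$ impinging on the sound-soft obstacle $\Omega$. Thus $u^s_\epsilon$ solves the exterior Dirichlet problem for the Helmholtz equation in $\mathbb{R}^n\setminus\overline{\Omega}$ with Dirichlet data $-v_{g_\epsilon}|_{\partial\Omega}$ and the Sommerfeld radiation condition. Applying the trace theorem to the $H^1(\Omega)$ approximation above, together with $u|_{\partial\Omega}=0$, yields
\begin{equation*}
\|v_{g_\epsilon}\|_{H^{1/2}(\partial\Omega)} = \|v_{g_\epsilon}-u\|_{H^{1/2}(\partial\Omega)} \le C\epsilon .
\end{equation*}

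The exterior sound-soft problem is well-posed for every wavenumber $k>0$ (this is the standard consequence of Rellich's lemma and does not rely on $k^2$ avoiding any eigenvalue), so the data-to-solution map is bounded into $H^1_{loc}$ of the exterior, giving $\|u^s_\epsilon\|_{H^1(B_R\setminus\overline{\Omega})} \le C\epsilon$ for any fixed ball $B_R\supset\overline{\Omega}$. Finally, the mapping from a radiating Helmholtz solution to its far-field pattern is continuous: using either the Green-type representation formula on $\partial B_R$ or the well-known $L^2(\mathbb{S}^{n-1})$-bound on the far-field in terms of $\|u^s_\epsilon\|_{H^{1/2}(\partial B_R)}$, we conclude
\begin{equation*}
\|F_k g_\epsilon\|_{L^2(\mathbb{S}^{n-1})} \le C\,\|u^s_\epsilon\|_{H^1(B_R\setminus\overline{\Omega})} \le C\epsilon ,
\end{equation*}
which is the first assertion in \eqref{eq:nnn1}.

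The only delicate point is step two: the density statement in Lemma \ref{HerglotzApproximation} controls the approximation only in $H^1(\Omega)$, and one must convert this into smallness of the exterior far-field through the trace and the exterior solution operator. Once the exterior Dirichlet problem's well-posedness (valid for all real $k$) and the continuity of the far-field map are invoked, everything is quantitative and linear in $\epsilon$, so the two requirements in \eqref{eq:nnn1} follow simultaneously without any interference between the $H^1$-lower bound inside $\Omega$ and the $H^{1/2}$-upper bound on $\partial\Omega$.
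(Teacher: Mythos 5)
Your proposal is correct and follows essentially the same route as the paper's own proof: approximate a Dirichlet eigenfunction by a Herglotz wave via Lemma \ref{HerglotzApproximation}, use the triangle inequality for the $H^1(\Omega)$-normalization, pass to the boundary by the trace theorem, and invoke well-posedness of the exterior sound-soft scattering problem (valid for every $k>0$) together with continuity of the far-field map to bound $\|F_k g_\epsilon\|_{L^2(\mathbb{S}^{n-1})}$ by $\mathcal{O}(\epsilon)$. The only difference is that you spell out the last step in more detail than the paper, which simply cites ``the well-posedness of the forward scattering system.''
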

\begin{proof}
Let $u_k$ be a Dirichlet eigenfunction for $\Om$ with respective to the Dirichlet eigenvalue $k^2$. Then $u_k\in H^1(\Om)$ is a solution of the following Dirichlet boundary value problem
\ben
\Delta u_k+k^2u_k=0\quad\mbox{in}\,\,\Om,\quad u_k=0\quad\mbox{on}\,\,\pa\Om.
\enn
By Lemma \ref{HerglotzApproximation}, for any sufficiently small $\epsilon>0$, there exists $g_{\epsilon}\in L^2(\mathbb{S}^{n-1})$ such that
\ben
\|v_{g_{\epsilon}}-u_k\|_{H^1(\Om)} < \epsilon,
\enn
where $v_{g_{\epsilon}}$ is the Herglotz wave function with the kernel $g_{\epsilon}$. Therefore by the triangle inequality,
\ben
\|v_{g_{\epsilon}}\|_{H^1(\Om)} \leq \|v_{g_{\epsilon}}-u_k\|_{H^1(\Om)}+\|u_k\|_{H^1(\Om)} < \epsilon+\|u_k\|_{H^1(\Om)}.
\enn
Similarly,
\ben
\|v_{g_{\epsilon}}\|_{H^1(\Om)} \geq \|u_k\|_{H^1(\Om)}-\|v_{g_{\epsilon}}-u_k\|_{H^1(\Om)} >\|u_k\|_{H^1(\Om)}-\epsilon.
\enn
Hence, one must have that $\|v_{g_{\epsilon}}\|_{H^1(\Omega)} \sim 1$.
Next, using the trace theorem, there exits $C_1>0$ such that
\ben
\|v_{g_{\epsilon}}-u_k\|_{H^{1/2}(\pa\Om)} \leq C_1 \|v_{g_{\epsilon}}-u_k\|_{H^1(\Om)} < C_1\epsilon.
\enn
Noting that $u_k=0$ on $\pa\Om$, we then obtain
\be\label{v_g<epsilon}
\|v_{g_{\epsilon}}\|_{H^{1/2}(\pa\Om)}< C_1\epsilon.
\en
From the definition of the far-field operator we see that
$F_{k}g_{\epsilon}$ is the far field-pattern of the Helmholtz system \eqref{HemEquobstacle} associated with the incident field being $v_{g_{\epsilon}}$.
Therefore, by \eqref{v_g<epsilon} and the well-posedness of the forward scattering system \eqref{HemEquobstacle}, we can immediately conclude that
\ben
\|F_{k}g_{\epsilon}\|_{L^2(\mathbb{S}^{n-1})}  = \mathcal{O}(\epsilon).
\enn

The proof is complete.
\end{proof}

By Theorem~\ref{Fg=0solvableForEigen} and normalisation if necessary, we readily have that the following constrained inequality exists at least one solution $g_\epsilon\in L^2(\mathbb{S}^{n-1})$,
\be\label{eigfunReconstuction}
\|F_{k}g_{\epsilon}\|_{L^2(\mathbb{S}^{n-1})} \leq \epsilon,\quad\mbox{and}\quad \|v_{g_{\epsilon}}\|_{L^2(\Omega)}=1,
\en
for $\epsilon\in\mathbb{R}_+$ sufficiently small, when $k^2$ is an interior Dirichlet eigenvalue to $\Omega$. It is noted that for the subsequent computational convenience, we replace the $H^1$-norm in \eqref{eq:nnn1} to be the $L^2$-norm in \eqref{eigfunReconstuction}. These two formulations are equivalent since $v_{g_\epsilon}$ is an entire solution to the Helmholtz equation. 

In what follows, we give several remarks concerning the solution to the constrained inequality \eqref{eigfunReconstuction}, which are of critical importance for our current study.

First, we shall justify that $v_{g_\epsilon}$ is indeed an approximation to a Dirichlet eigenfunction associated to the eigenvalue $k^2$. In fact, generically, this is the case as explained in what follows. As also mentioned in the proof of Theorem \label{Fg=0solvableForEigen}, $F_{k}g_{\epsilon}$ is actually the far field-pattern of the Helmholtz system \eqref{HemEquobstacle} associated with the incident field being $v_{g_{\epsilon}}$. Let $u^s_{g_\epsilon}$ be the corresponding scattered field and clearly, $u^\infty_{g_\epsilon}=Fg_\epsilon$. By the homogeneous Dirichlet boundary condition on $\partial\Omega$, we readily see that
\begin{equation}\label{eq:bc1}
u^s_{g_\epsilon}=-v_{g_\epsilon}\quad \mbox{on}\ \ \partial\Omega.
\end{equation}
The classical Rellich's theorem (cf. \cite{CK}) states that if $u^\infty\equiv 0$, then $u^s\equiv 0$ in $\mathbb{R}^n\backslash\overline{\Omega}$. A quantitative version of the Rellich's theorem was proved in \cite{EL}, which states that under a certain condition of the scattering system \eqref{HemEquobstacle}, if $\|u^\infty\|_{L^2(\mathbb{S}^{n-1})}\leq \epsilon$, then $\|u^s\|_{H^1(B_R\backslash\overline{\Omega})}\leq \psi(\epsilon)$, where $B_R$ is a ball containing $\Omega$, and $\psi$ is of a double logarithmic form, and satisfying $\lim_{\epsilon\rightarrow +0}\psi(\epsilon)=0$ nonetheless. The condition required on the scattering system \eqref{HemEquobstacle} for the quantitative Rellich's theorem to hold in \cite{EL} is that $u^s$ is H\"older continuous locally near $\partial\Omega$ and $\partial\Omega$ satisfies a uniform cone property. Hence, in the case of our current study, if we require that $\partial\Omega$ is piecewise $C^{1,1}$ continuous, then by the local regularity estimate (cf. \cite{Mclean}), we have that $u_{g_\epsilon}^s$ is locally $H^2$, and hence H\"older continuous near the boundary $\partial\Omega$. If a uniform cone property is also satisfied by $\Omega$, then by the quantitative Rellich's theorem, we readily have by \eqref{eq:bc1} that
\begin{equation}\label{eq:bc2}
\|v_{g_\epsilon}\|_{H^{1/2}(\partial\Omega)}=\|u^s_{g_\epsilon}\|_{H^{1/2}(\partial\Omega)}\leq \|u^s_{g_\epsilon}\|_{H^{1}(B_R\backslash\overline{\Omega})}\leq \psi(\epsilon)\rightarrow 0\quad\mbox{as}\ \ \epsilon\rightarrow+0.
\end{equation}
We note that the uniform cone property is a mild geometrical requirement on $\Omega$. The H\"older continuity of $u_{g_\epsilon}^s$ near $\partial\Omega$ and the uniform cone property of $\Omega$ are only sufficient conditions to guarantee \eqref{eq:bc2}. The quantitative Rellich's theorem may hold in more general scenarios \cite{LPRX,LRX,R1}. We shall not further explore this point and only assume that the quantitative Rellich's theorem holds.
Hence, one has that  the solution $v_{g_\epsilon}$ to \eqref{eigfunReconstuction} possesses a nearly vanishing boundary value on $\partial\Omega$. Indeed, this nearly vanishing property of $v_{g_\epsilon}$ is all that we shall need for the reconstruction of $\partial\Omega$. Nevertheless, noting that $k^2$ is a Dirichlet eigenvalue in $\Omega$ and $v_{g_\epsilon}$ satisfies \eqref{eq:bc2}, by a standard elliptic PDE argument, one can show that
\begin{equation*}
v_{g_\epsilon}=v_0+v_\epsilon,
\end{equation*}
where $v_0$ is a Dirichlet eigenfunction associated to $k^2$ in $\Omega$ and $\|v_\epsilon\|_{H^1(\Omega)}\leq C\psi(\epsilon)$, where $C$ is a positive constant depending only on $k$ and $\Omega$. That is, $v_{g_\epsilon}$ is indeed an approximation to an interior Dirichlet eigenfunction.

Second, $k^2$ being an interior eigenvalue is a sufficient condition to guarantee the existence of a solution to \eqref{eigfunReconstuction}. It may occur that \eqref{eigfunReconstuction} still possesses a solution but $k^2$ is not an interior eigenvalue. However, this won't affect the proposed imaging scheme. In fact, per our earlier discussion, we first use the strategy in Section~\ref{EigDetermination} to determine the interior eigenvalues, and at those determined interior eigenvalues, we seek solutions to \eqref{eigfunReconstuction}. The validity of the latter determination is guaranteed by Theorem \ref{Fg=0solvableForEigen}. Furthermore, the determined Herglotz wave is an approximation to an interior eigenfunction. In what follows, we shall use the nearly vanishing behaviour of the derived Herglotz wave function to find the shape of the unknown obstacle $\Omega$. Moreover, we shall discuss in Section~\ref{NumericalMethod} strategies in finding solutions to \eqref{eigfunReconstuction}.

\subsection{Imaging scheme by the interior resonant modes}\label{ImagingResonantmode}

With the above preparations, we are ready to present the proposed imaging scheme via the use of the interior resonant modes. It mainly consists of the following three steps.

First, one collects the far-field data as specified in \eqref{eq:ip1}, and uses it to determine the interior eigenvalues lying within $I$. To be more specific, for a
fixed point $z\in\Om$, we solve the equation \eqref{FMequation} by the Picard theorem, i.e., we express the solution to \eqref{FMequation} in terms of a singular system. Then we plot the $L^2(\mathbb{S}^{n-1})$-norm of the aforesaid solution against the wavenumber, and a peak shall appear if $k^2$ happens to be a Dirichlet eigenvalue. Hence, by locating the places where peaks occur, one can determine the corresponding interior eigenvalues within $I$.

Second, having determined the Dirichlet eigenvalue $k^2$, we next determine the corresponding interior eigenfunctions. Following our discussion about the constrained problem \eqref{eigfunReconstuction}, we can solve the problem numerically to obtain an approximation to the relevant eigenfunction. It is noted that if the multiplicity of the eigenvalue $k^2$ is bigger than $1$, the solution to \eqref{eigfunReconstuction} is not unique. In Section~\ref{NumericalMethod}, we shall develop two deterministic strategies to calculate a numerical solution to \eqref{eigfunReconstuction}, which shall be respectively referred to FTLS and GTLS.

Third, the determination of approximations to the corresponding eigenfunctions paves the way for the shape determination.
A natural idea is to find the boundary $\pa\Om$ by locating the zeros of the Herglotz wave function $v_g$ obtained from the second step.
Here, we would like to mention that the location of vanishing of eigenfunctions is an important area of study in the classical spectral theory for the Dirichlet Laplacian. Considerable effort has been spent on the properties of the so-called nodal set, which is the set of points in the domain such that
the eigenfunction vanishes. The celebrated Courant nodal domain theorem states that the first eigenfunction corresponding to the smallest eigenvalue cannot have any nodes inside the domain, whereas the eigenfunction corresponding to the $m$-th eigenvalue counting multiplicity, divides the domain $\Omega$ into at least 2 and at most $m$ pieces. The common feature of the Dirichlet eigenfunctions is that they all vanish on the boundary.
Based on these facts, we propose an indicator function associated with a single interior eigenvalue as follows:
\be\label{Iresonant_single}
I_k^{Resonant}(z):=-\ln |v_{g_k}(z)|.
\en
The value of the indicator function is always relatively large if the sampling point is located on the boundary $\partial\Omega$, otherwise the value is relatively small unless $z$ belongs to the nodal set of the eigenfunction. According to the Courant nodal domain theorem discussed above, if $k$ is the first eigenvalue, then the value of $I_k^{Resonant}(z)$ for $z\in\partial\Omega$ is always larger than the value of $I_k^{Resonant}(z)$ for $z\in\Omega$. This indicating behaviour clearly helps to locate $\partial\Omega$. We also propose the following indicator function by making use of multiple resonant modes:
\be\label{Iresonant_multi}
I_{\mathbb{K}_L}^{Resonant}(z):=-\ln\sum_{k\in \mathbb{K}_L} |v_{g_k}(z)|,
\en
where $\mathbb{K}_L=\{k_1, k_2, \cdots, k_L\}$ denotes a set of the $L$ ($L\in \mathbb{Z}_+$) distinct eigenvalues. Since the common feature of those eigenfunctions is that they vanish on the boundary $\partial\Omega$, it is natural to expect that generically, the indicator function $I_{\mathbb{K}_L}^{Resonant}(z)$ possess a larger value for $z\in\partial\Omega$ than that for $z\in\hspace*{-3mm}\backslash\, \partial\Omega$. This indicating behaviour has been nicely confirmed in our subsequent numerical experiments.

Summarizing the above discussion, we formulate the imaging scheme for the shape reconstruction by the interior resonant modes as follows:
\begin{itemize}
  \item[] {\bf Recovery scheme by the interior resonant modes:}\medskip
  \item Step 1. Collect the far field data $u^{\infty}(\hat{x},\hth,k)$ for $m$ incident directions, $m$ observation directions, and $M$ frequencies distributed in $(k_{min}, k_{max})$.\medskip
  \item Step 2. For a proper point $z\in\Om$, solve the equation \eqref{FMequation} for different wavenumber $k$. Plot the $L^2$ norm of the solutions against the wavenumber, and then pick the wavenumbers such that peaks appear.\medskip

  \item Step 3. With the obtained resonant wavenumbers, solve the equation \eqref{eigfunReconstuction} by the FTLS method or the GTLS method  in Section \ref{NumericalMethod},  to obtain the corresponding Herglotz kernels $g_k$.\medskip

  \item Step 4. With the calculated resonant wavenumbers and the corresponding Herglotz wave functions $v_{g_k}$, plot the indicator functions \eqref{Iresonant_single} or \eqref{Iresonant_multi}  in a proper domain containing the scatterer $\Om$.
\end{itemize}

\section{Numerical methods for eigenfunction approximation}\label{NumericalMethod}

In this section we describe the numerical methods to carry out Step 3 in our reconstruction scheme. Since $\Omega$ is unknown, the constraint in \eqref{eigfunReconstuction} is unrealizable in practice. We tackle this issue by considering the following optimization problem instead:
\begin{align}
  \label{eq:1}
  \min_{g \in L^2(\mathbb{S}^{n-1})} \|F_k g\|_{L^2(\mathbb{S}^{n-1})} \quad {\rm s.t.} \quad \|g\|_{L^2(\mathbb{S}^{n-1})}=1,
\end{align}
where we replace the original constraint $\|v_g\|_{L^2(\Omega)}=1$ by the modified constraint $\|g\|_{L^2(\mathbb{S}^{n-1})}=1$. To justify the modification ($n=3$), consider the spherical harmonic expansion
\begin{align*}
  g(d) = \sum_{n=0}^\infty \sum_{m=-n}^n \hat{g}_n^m Y_n^m(d),
\end{align*}
where $Y_n^m$ are the standard spherical harmonic functions. By the Funk-Hecke formula \cite{CK}, we have
\begin{align*}
  v_g(x) = \sum_{n=0}^\infty \sum_{m=-n}^n \frac{4\pi}{i^n} \hat{g}_n^m j_n \left( k|x| \right) Y_n^m(-\hat{x}), \quad x \in \mathbb{R}^3,
\end{align*}
where $j_n$ are the $n$-th order Bessel functions of the first kind. Let $N \in \mathbb{N}^+$ be fixed and
\begin{align*}
  g^N &= \sum_{n=0}^N \sum_{m=-n}^n \hat{g}_n^m Y_n^m(d)
\end{align*}
be an $N-$dimensional low-frequency approximation of $g$ and
\begin{align*}
  v_{g^N}(x) &= H_k \left[ g^N \right](x) = \sum_{n=0}^N \sum_{m=-n}^n \frac{4\pi}{i^n} \hat{g}_n^m j_n(k|x|) Y_n^m(-\hat{x}).
\end{align*}
Clearly we have $\|v_{g^N}\|_{L^2(D)} \leq C_1 \|g^N\|_{L^2(\mathbb{S}^2)}$ for some positive constant $C_1$ independent of $g$. 
Without loss of generality, assume $\Omega$ contains the origin and let $B_R$ be a ball centered at the origin and contained in $\Omega$. Then
\begin{align*}
  \|v_{g^N}\|_{L^2(D)}^2 \geq \|v_{g^N}\|_{L^2(B_R)}^2 = \sum_{n=0}^N \sum_{m=-n}^n (4\pi)^2 \left| \hat{g}_n^m \right|^2 \int_0^R r \left| j_n(kr) \right|^2 dr \geq C_2 \|g^N\|^2_{L^2(\mathbb{S}^2)}
\end{align*}
for some positive constant $C_2$ independent of $g$. Hence the two constraints are equivalent if $g$ is restricted to the finite dimensional subspace $U^N \subset L^2(\mathbb{S}^2)$ consisting of the basis functions $\{Y_n^m: n \leq N\}$. The two-dimensional case can be justified following a similar argument.

\subsection{Fourier-Total-Least-Square (FTLS) method}
The above analysis motivates us to consider the following finite dimensional version of \eqref{eq:1},
\begin{align}
  \label{eq:2}
  \min_{g^N \in U^N} \|F_k g^N\|_{L^2(\mathbb{S}^{n-1})} \quad {\rm s.t.} \quad \|g^N\|_{L^2(\mathbb{S}^{n-1})}=1.
\end{align}
Note that the cut-off frequency $N$ also plays the role of the regularization parameter. Further discretization of \eqref{eq:2} leads to the following total-least-square problem:
\begin{align}
  \label{eq:3}
  \min_{\hat{g} \in \mathbb{C}^q} \|A \hat{g} \|_2 \quad {\rm s.t.} \quad \| \hat{g} \|_2=1,
\end{align}
where the matrix $A \in \mathbb{C}^{p \times q}$ can be computed from the discrete version of the far-field operator. In the two-dimensional case for example, let $\hat{F}_k(i,j)=F_k(\theta_i,\phi_j)$ be the measured data of the far-field pattern at equally spaced observation angles $\theta_i$ and incident angles $\phi_j$ for $1 \leq i \leq I, 1 \leq j \leq J$. Let
\begin{align*}
  g^N(\phi) = \sum_{n=-N}^N \hat{g}_n e^{i n \phi}, \quad \phi \in [0,2\pi]
\end{align*}
be the Fourier series expansion of $g$ with cut-off frequency $N$. Then we set the matrix $A=\hat{F}_k T_N$, where $\hat{F}_k \in \mathbb{C}^{I \times J}$ and the matrix $T_N \in \mathbb{C}^{J \times (2N+1)}$ is given by $T_N(j,n) = e^{i n \phi_j}, |n| \leq N$. The three-dimensional problem can be discretized in the same way except the Fourier expansion should be replaced by the spherical harmonic expansion. 

Let $A=USV$ be the singular value decomposition of $A$, then the solution of \eqref{eq:3} is nothing but the singular vectors in $V$ associated with the smallest singular value in $S$. The solution procedure in this subsection is referred to as the Fourier-Total-Least-Square (FTLS) method in this paper.

\subsection{Gradient-Total-Least-Square (GTLS) method}
Another approach to discretization is the collocation method. Let $g^N = \left[ g(d_i) \right]$, where $\{d_i:i=1,\cdots,N\}$ are appropriately chosen grid points on $\mathbb{S}^{n-1}$. We arrive at a discrete problem in the same form as \eqref{eq:3}. However, numerical experiments showed the solutions are too oscillatory to yield satisfactory results. To suppress the oscillations, we add a penalty term to \eqref{eq:1} to obtain the regularized problem
\begin{align}
  \label{eq:4}
  \min_{g \in L^2(\mathbb{S}^{n-1})} \|F_k g\|_{L^2(\mathbb{S}^{n-1})} + \alpha \|\nabla g \|_{L^2(\mathbb{S}^{n-1})} \quad {\rm s.t.} \quad \|g\|_{L^2(\mathbb{S}^{n-1})} = 1,
\end{align}
where $\alpha>0$ is the regularization parameter. Further discretization of \eqref{eq:4} leads to the problem
\begin{align}
  \label{eq:5}
  \min_{x \in \mathbb{C}^M} x^* B_\alpha x \quad {\rm s.t.} \quad \|x\|_2 = 1,
\end{align}
where
\begin{align*}
  B_\alpha = \hat{F}_k^* \hat{F}_k + \alpha D^* D
\end{align*}
and $D$ denotes a discretization of the differential operator $\nabla$. Among the many choices of $D$, we choose the two-point finite difference method, i.e.
\begin{align*}
D=\frac{1}{h}
  \begin{pmatrix*}[r]
        -1 & 1 &  0 & \cdots   &0 &0 \\
         0 & -1 & 1  &  \cdots  &0 &0 \\
         \quad &\cdots &\quad &\quad& \cdots\\
           0 & 0 & 0  &  \cdots  &-1 &1 \\
           1 & 0 & 0  &  \cdots  &0 &-1 \\
      \end{pmatrix*},
\end{align*}
where $h>0$ denotes the grid size.

The solution of \eqref{eq:5} is nothing but the eigenvectors of $B_\alpha$ corresponding to the smallest eigenvalue. The solution procedure in this subsection is referred to as the Gradient-Total-Least-Square (GTLS) method in this paper.

\section{Numerical experiments}\label{NumericalExamples}

To carry out step 1 of the proposed scheme, we adopt the finite element method to compute the synthetic data $u^\infty(\hat{x}_i, d_j),\, i=1,2,\cdots,M, \, j=1,2,\cdots,N$, where $\hat{x}_i$ denotes the observation directions and $d_j$ denotes the incident directions. For two-dimensional problems, the observation and incident directions are chosen to be equidistantly distributed points on the unit circle. For three-dimensional problems, the observation angles are chosen to be the nodal points of a pseudo-uniform triangulation of the unit sphere. In order to simplify the calculation of the differential operator $\nabla$ on the sphere, the incident angles are chosen to be the grid points of a uniform rectangular mesh of $[0,\pi] \times [0, 2\pi]$.

These far field data are then stored in the matrices $F\in \C^{M \times N}$, where $F(i,j) = u^\infty(\hat{x}_i, \hat{d}_j)$. We further perturb $F$ with random noise by setting
\ben
F^{\delta}\ =\ F +\delta\|F\|\frac{R_1+R_2 i}{\|R_1+R_2 i\|},
\enn
where $\delta>0$ is the relative noise level, $R_1$ and $R_2$ are two $M \times N$ matrixes containing pseudo-random numbers drawn from a normal distribution with mean zero and standard deviation one. 

For the numerical experiments, we consider a pear-shaped domain in the two dimension (c.f. Figure \ref{fig:Geometry} (a)), which is parameterized as
\begin{align*}
  x(t)=(2+0.3\cos3t)(\cos t,\sin t), \quad t \in [0,2\pi],
\end{align*}
and a kite-shaped domain in the three dimension (c.f. Figure \ref{fig:Geometry} (b)), which is parameterized as
\begin{align*}
  x(t)=(\cos t+0.65\cos2t-0.2, 1.5\sin t\cos \tau, 1.5\sin t\sin \tau), \ t\in[0,\pi],\  \tau\in[0,2\pi].
\end{align*}

\begin{figure}[t]
\centering
\subfigure[Pear]{\includegraphics[width=0.4\textwidth]
                   {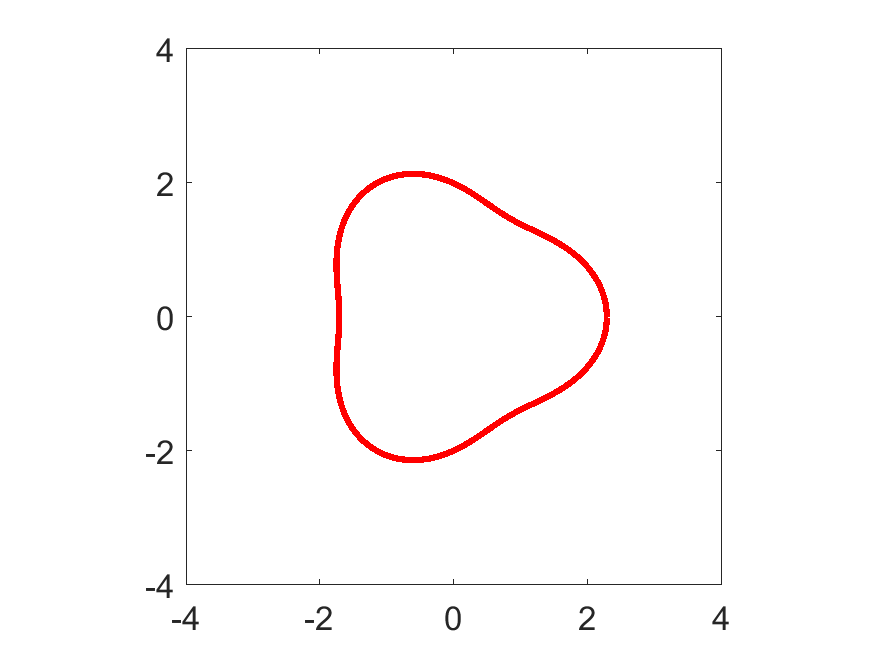}} \hspace{2em}
\subfigure[3D-Kite]{\includegraphics[width=0.4\textwidth]
                   {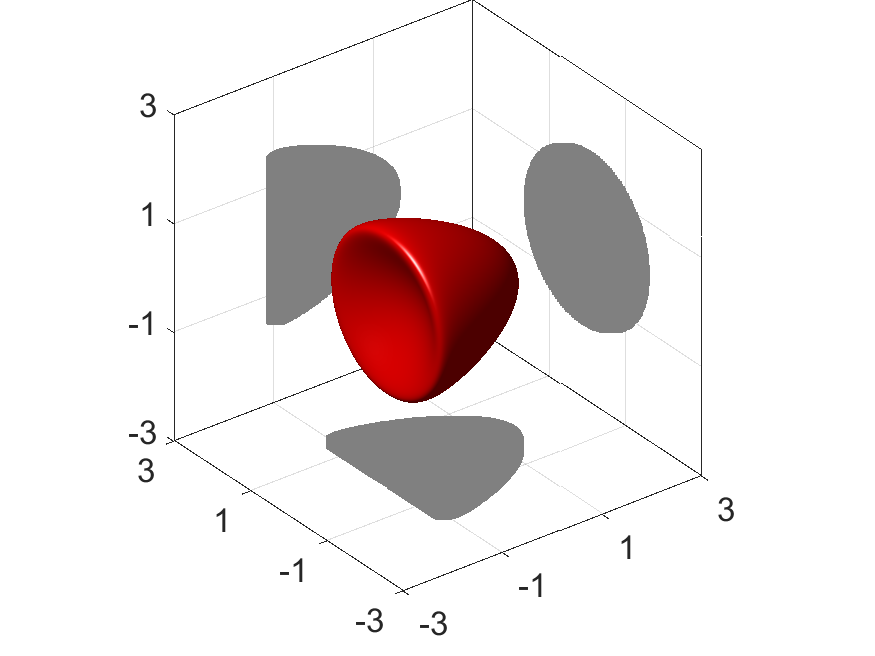}}
\caption{\label{fig:Geometry} Shapes considered in the numerical experiments. (a) pear-shaped domain in 2D; (b) kite-shaped domain in 3D, the gray shadows are projects of the domain on coordinate planes.}
\end{figure}

\subsection{Recover eigenvalues}
The first example is to test the recovery of eigenvalues in step 2 of our reconstruction scheme. Let $\Omega$ be the pear shaped domain shown in Figure \ref{fig:Geometry}(a). The synthetic far-field data are computed at $64$ observation directions, $64$ incident directions and $301$ equally distributed wave numbers in $[1, 4]$.
\begin{figure}[t]
\centering
\subfigure[\textbf{no noise}]{\includegraphics[width=0.7\textwidth]
                   {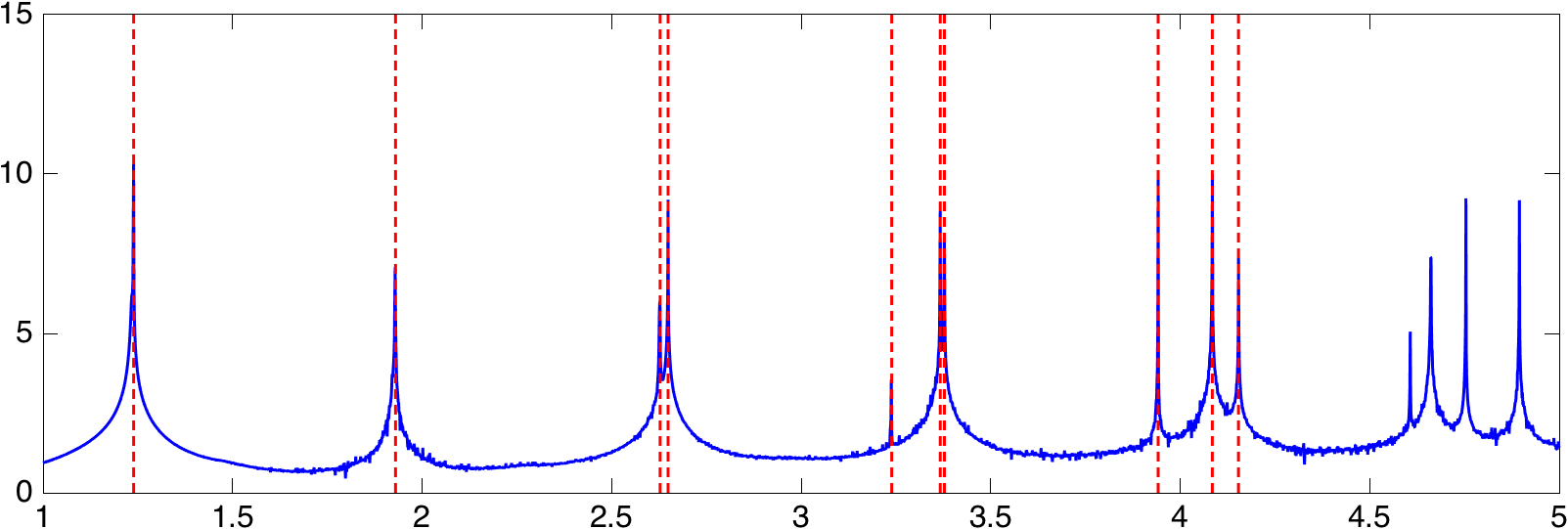}} \\
\subfigure[\textbf{$5\%$ noise}]{\includegraphics[width=0.7\textwidth]
                   {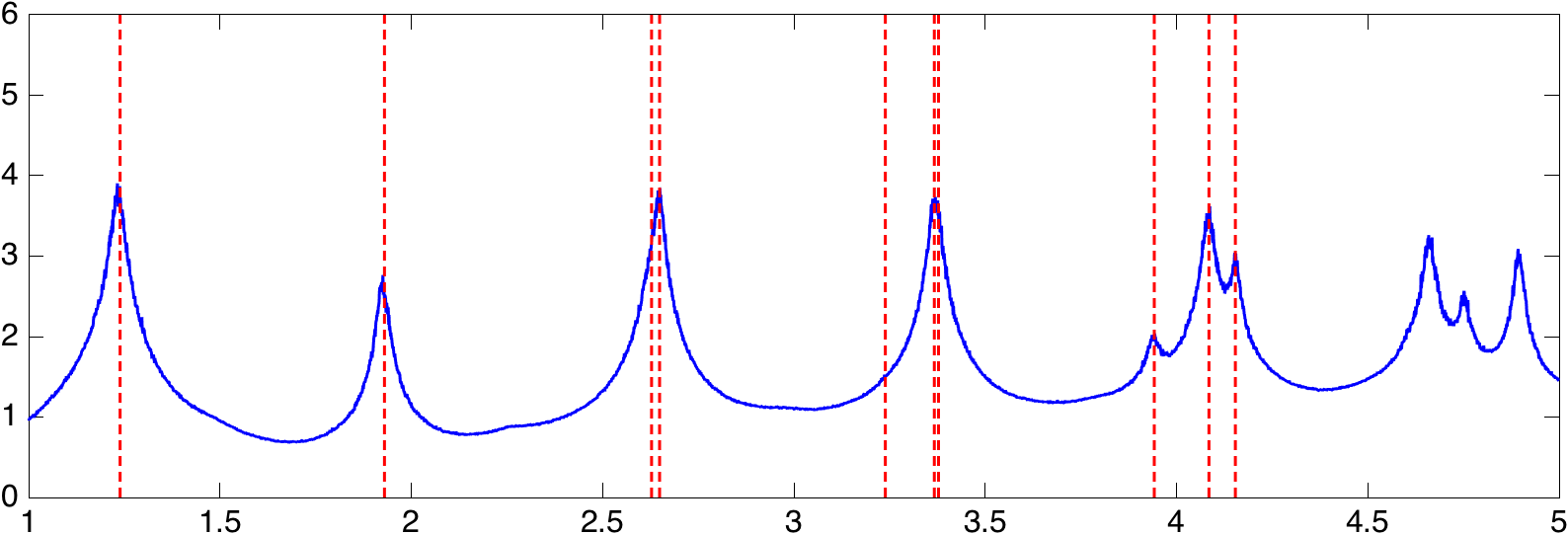}}
\caption{\label{fig:eigenvalue} Recover eigenvalues for the Pear; solid blue lines: the norm $\|g_z^k\|_{L^2(\mathbb{S}^1)}$ for $z=(0.3,0.2)$ against $k \in [1,5]$; red dashed line: location of eigenvalues computed by the FEM method.}
\end{figure}
In Figure \ref{fig:eigenvalue}, we plot the value of $\|g_z^k\|_{L^2(\mathbb{S}^1)}$ against $k \in [1,5]$ (solid blue lines) for the fixed test point $z=(0.3, 0.2)$. As expected, we observe clear spikes from which we can pick up the eigenvalues. As a comparison, the dashed dot lines indicate the location of the eigenvalues computed from the exact shape by the finite element method. For noise-free data, the eigenvalues can be accurately picked up even if some of them are close to each other (e.g. the 6th and 7th eigenvalue). For noisy data, the spikes become less sharp and some of the eigenvalues can not be identified (e.g. the 3rd and 5th eigenvalue). However, the accuracy for those reconstructed eigenvalues are still robust to the noise. Table \ref{tab:pear_eigenvalues} shows the first few eigenvalues obtained from Figure \ref{fig:eigenvalue}. We emphasize that the missed eigenvalues would not cause major difficulties since the reconstruction of the shape does not rely on the availability of all the eigenvalues.
\begin{table}[t]
\center
\begin{tabular}{lccccccccccccccccc}
  \toprule
  Index of eigenvalue       & $1$   & $2$  &$3$   & $4$  & $5$  & $6$ & $7$ & $8$ & $9$ & $10$ \\
  \midrule
 FEM          &$1.24$  &$1.93$  &$2.63$  &$2.65$  &$3.24$  &$3.37$ & $3.38$ & $3.94$ & $4.09$ & $4.15$ \\ 
 FM: no noise  &$1.24$  &$1.93$  &$2.63$  &$2.65$  &$3.24$  &$3.37$ & $3.38$ & $3.94$ & $4.09$  &$4.15$ \\ 
 FM: $5\%$ noise &$1.23$  &$1.92$  &$~$  &$2.65$  &$~$  &$3.37$ & $~$ & $3.94$ & $4.08$  & $4.15$ \\ 
  \bottomrule
 \end{tabular}
\\[1em]
\caption{ The first few Dirichlet eigenvalues of the pear shaped domain. FEM: computed from the exact shape by the FEM method; FM: reconstructed from the far-field data through the factorization method.}\label{tab:pear_eigenvalues}
\end{table}

\subsection{Reconstruct shapes}
With the recovered eigenvalues, we proceed to the reconstruction of the Herglotz kernels $g_k$ in step 3 of the scheme. Once the Herglotz kernel is recovered, we proceed to step 4 of our scheme, i.e. compute the approximate eigenfunction from the kernel and plot the indicator functions in a sampling region. 

\subsubsection{Single-frequency reconstruction}
We first test the reconstruction scheme with eigenvalues computed from the exact shape and noise-free far-field data associated with those eigenvalues. In Figure \ref{fig:pear_Vg} we plot the indicator function \eqref{Iresonant_single} for the first three eigenvalues and far-field data using the FTLS and GTLS methods, respectively. The cut-off frequency is chosen to be $N=10,20,30$ respectively for the FTLS method and the regularization parameter is chosen to be $\alpha=0$ for the GTLS method. The reason for using the same regularization parameter for different eigenvalues in the GTLS method is that the reconstruction for different eigenvalue is insensitive to the choice of $\alpha$. This could be considered as an advantage of of the GTLS method over the FTLS method.
\begin{figure}[t]
\hfill\subfigure[$k_1=1.24$]{\includegraphics[width=0.33\textwidth]
                   {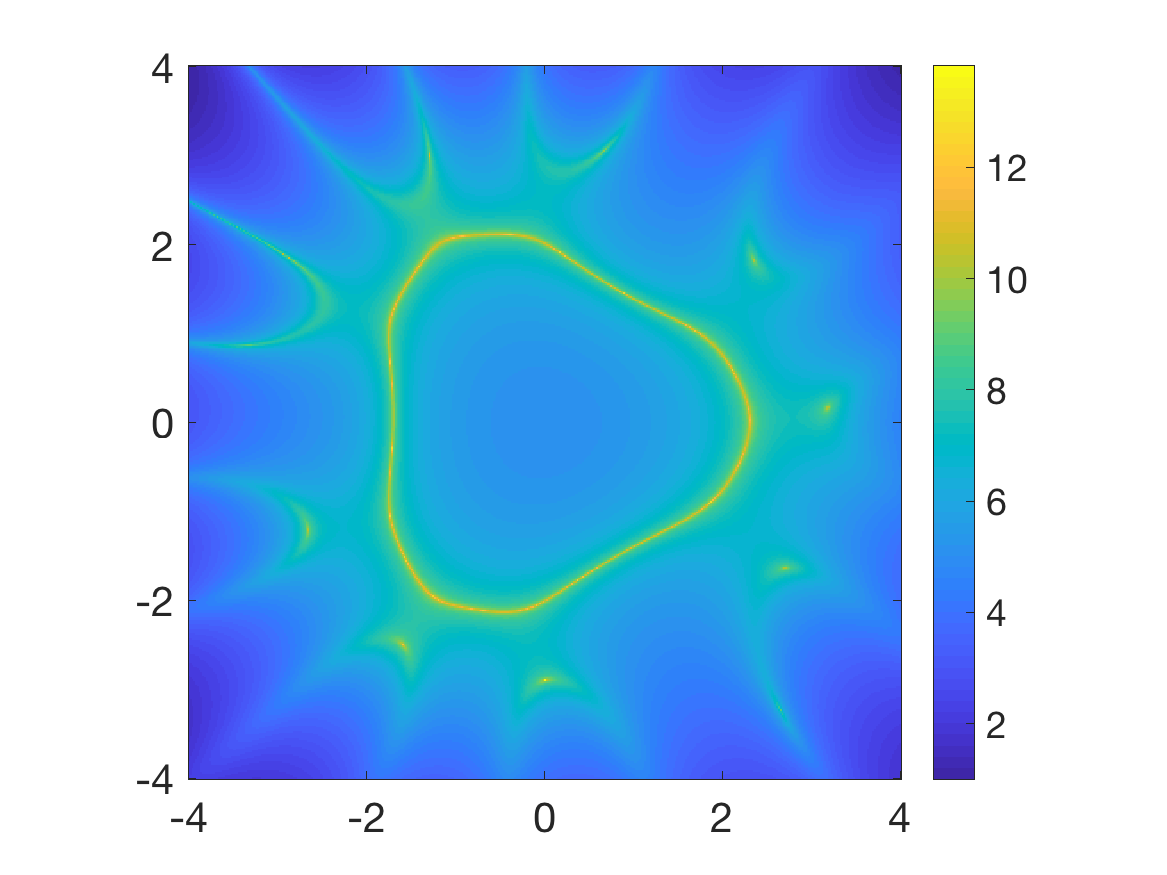}}\hfill
\hfill\subfigure[$k_2=1.93$]{\includegraphics[width=0.33\textwidth]
                   {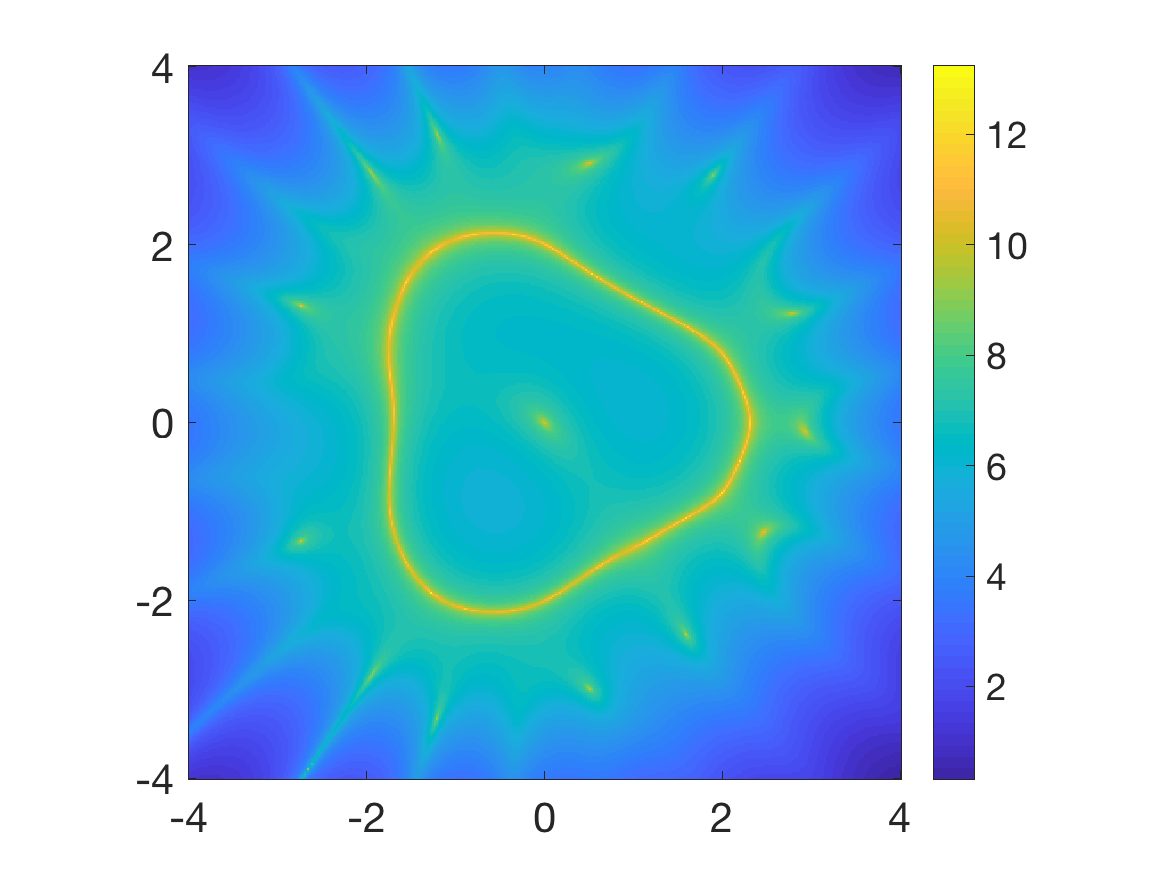}}\hfill
\hfill\subfigure[$k_3=2.63$]{\includegraphics[width=0.33\textwidth]
                   {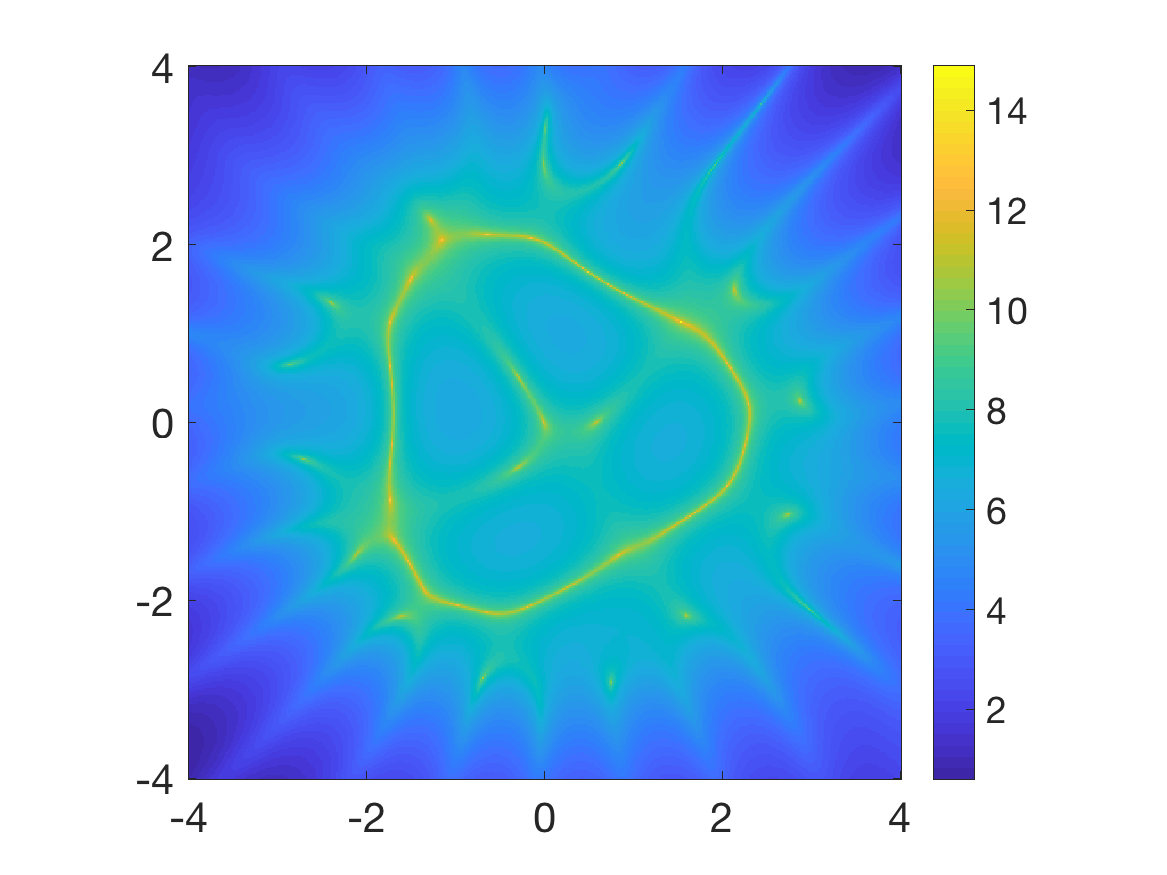}}\hfill\\
\hfill\subfigure[$k_1=1.24$ ]{\includegraphics[width=0.33\textwidth]
                   {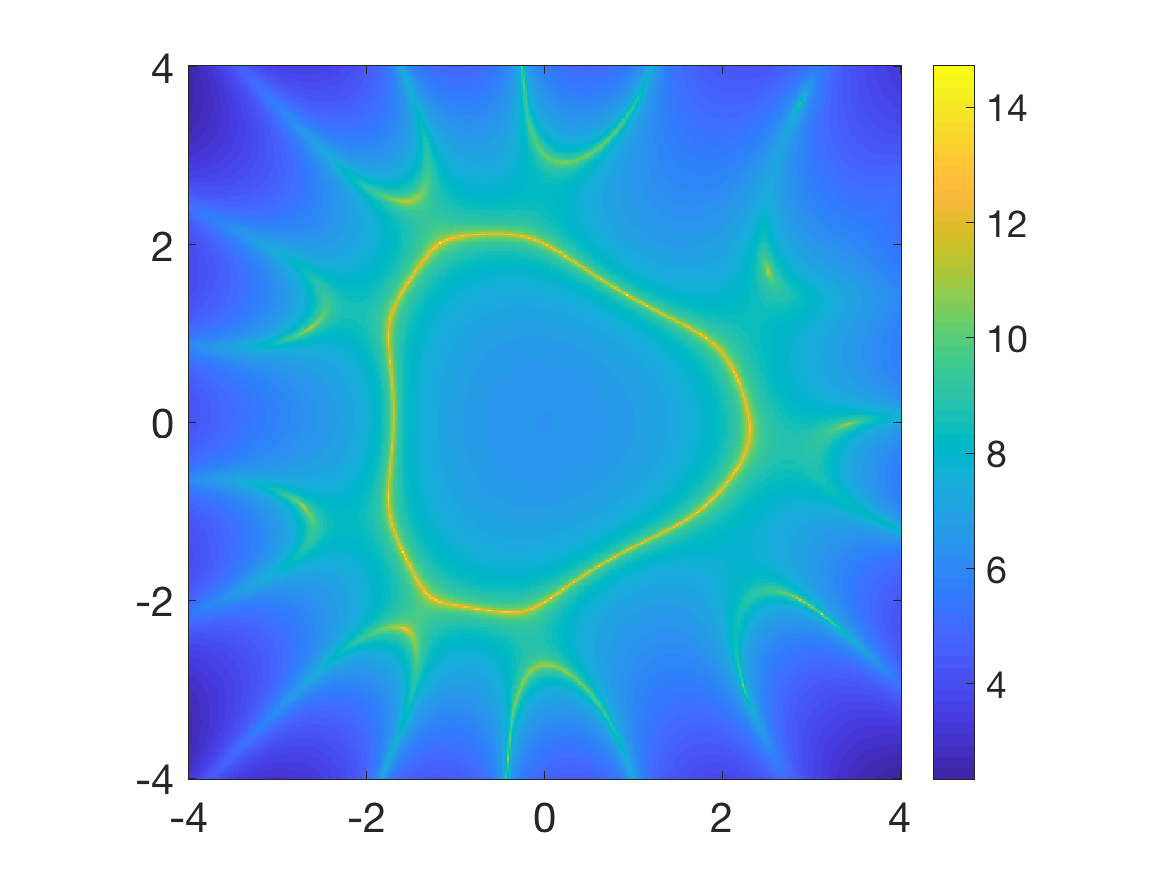}}\hfill
\hfill\subfigure[$k_2=1.93$]{\includegraphics[width=0.33\textwidth]
                   {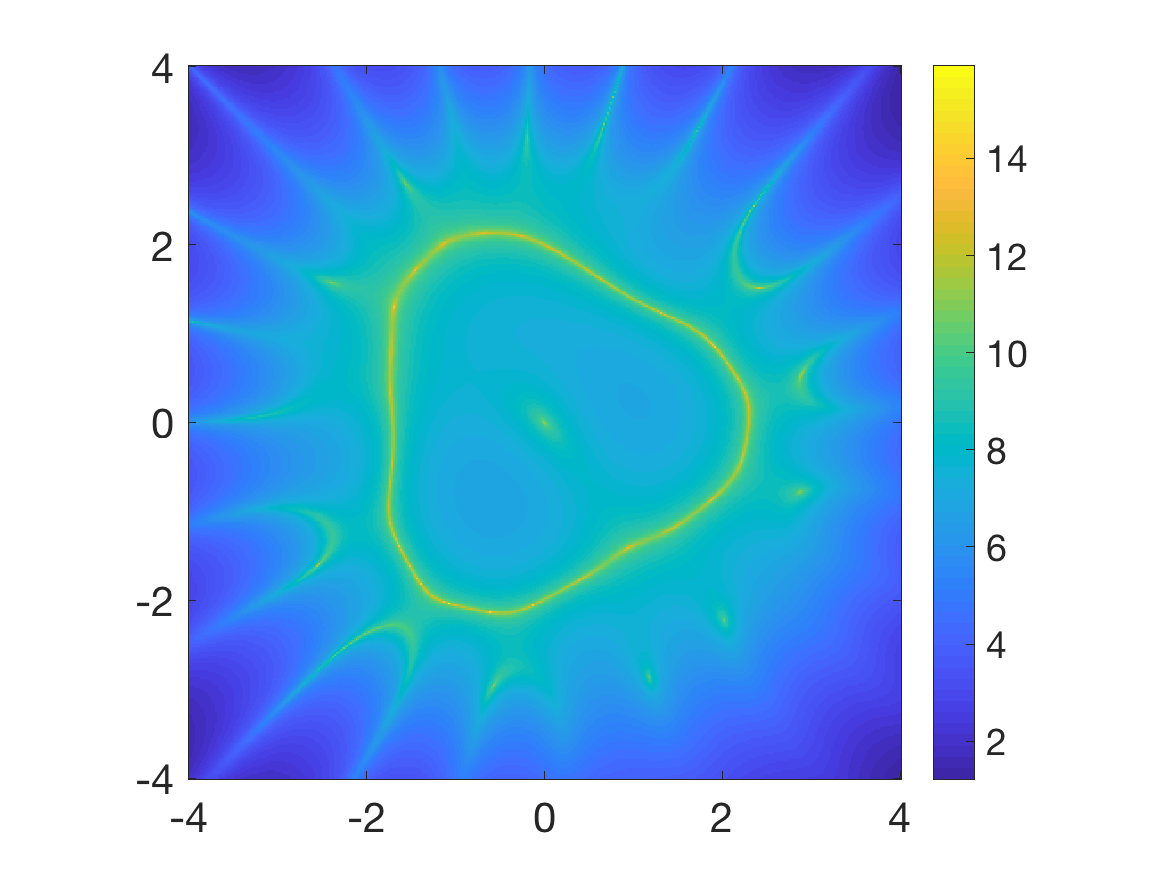}}\hfill
\hfill\subfigure[$k_3=2.63$]{\includegraphics[width=0.33\textwidth]
                   {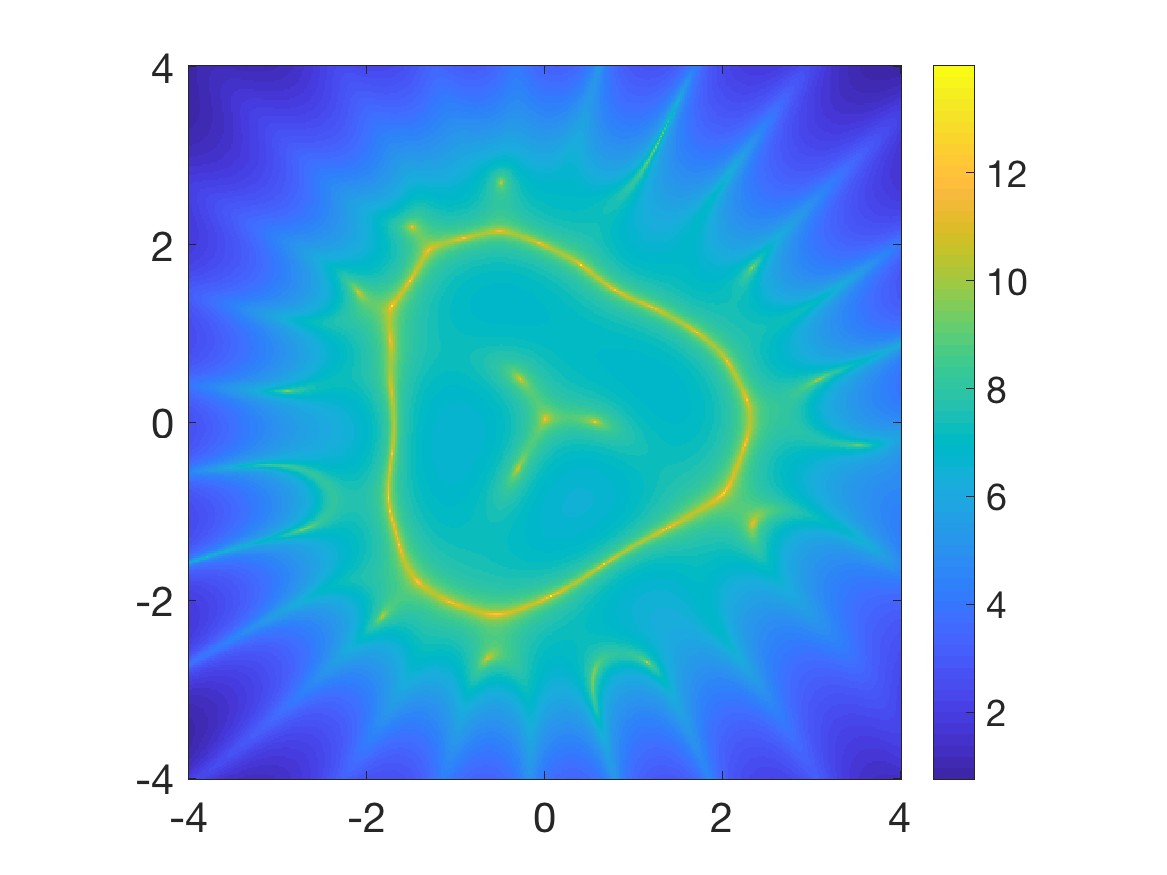}}\hfill
\caption{Reconstruct the Pear: single-frequency indicator function for the exact eigenvalues and noise-free far-field data. (a)--(c): FTLS method; (d)--(f): GTLS method}
\label{fig:pear_Vg}
\end{figure}
For the first eigenvalue $k_1=1.24$, the boundary of the obstacle can be clearly identified from the closed nodal line in the center of the sampling region. In view of the celebrated Courant's nodal line theorem, the eigenfunction associated with the first eigenvalue is nonzero in the interior of the domain. Hence the boundary of the obstacle can be identified very well by using the first eigenvalue alone for good behaved domain. Moreover, the quality of the reconstruction is beyond the resolution limit in terms of the wavelength $\lambda_1=2\pi/k_1 =5.07$. For larger eigenvalues, nodal lines could appear in the interior of the domain and deteriorate the results. 

\begin{figure}[t]
\hfill\subfigure[$k_1=1.23$]{\includegraphics[width=0.33\textwidth]
                   {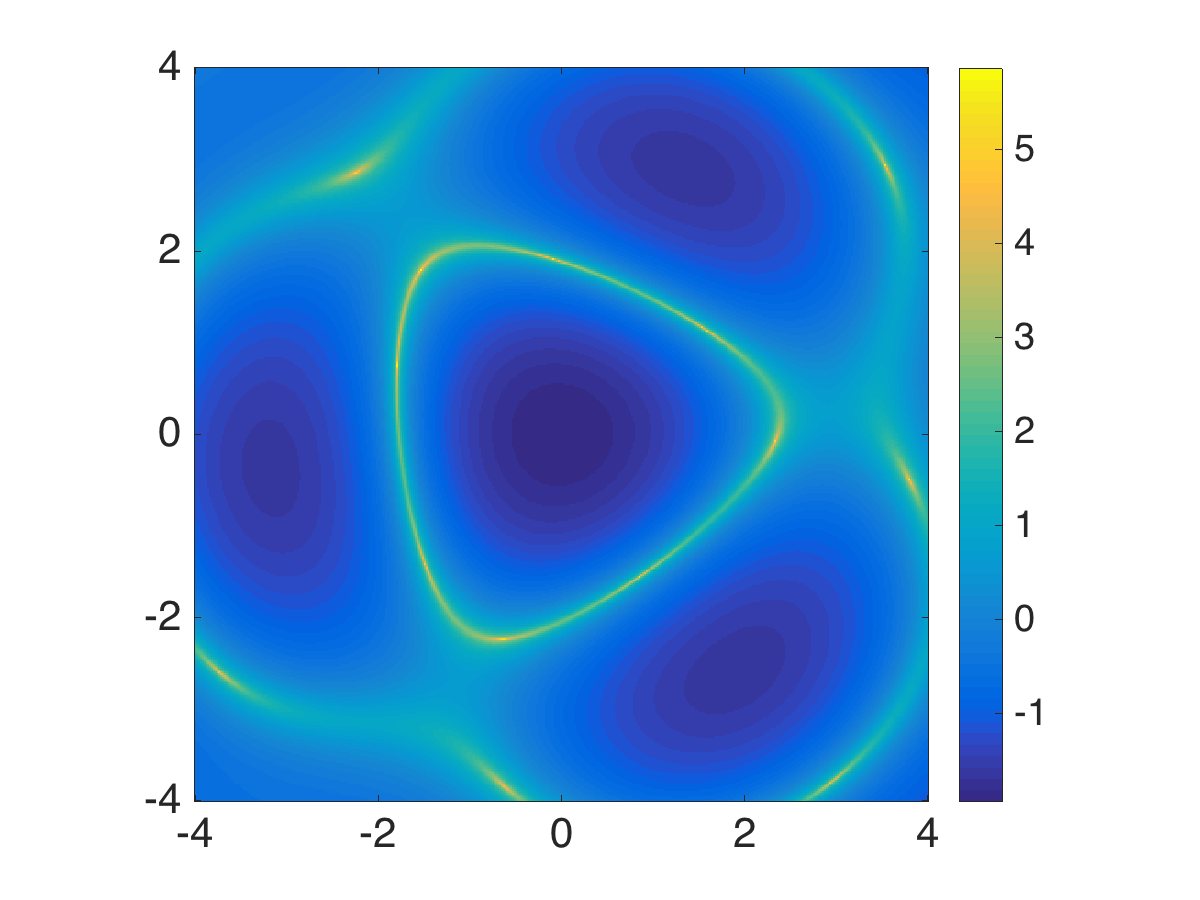}}\hfill
\hfill\subfigure[$k_2=1.92$]{\includegraphics[width=0.33\textwidth]
                   {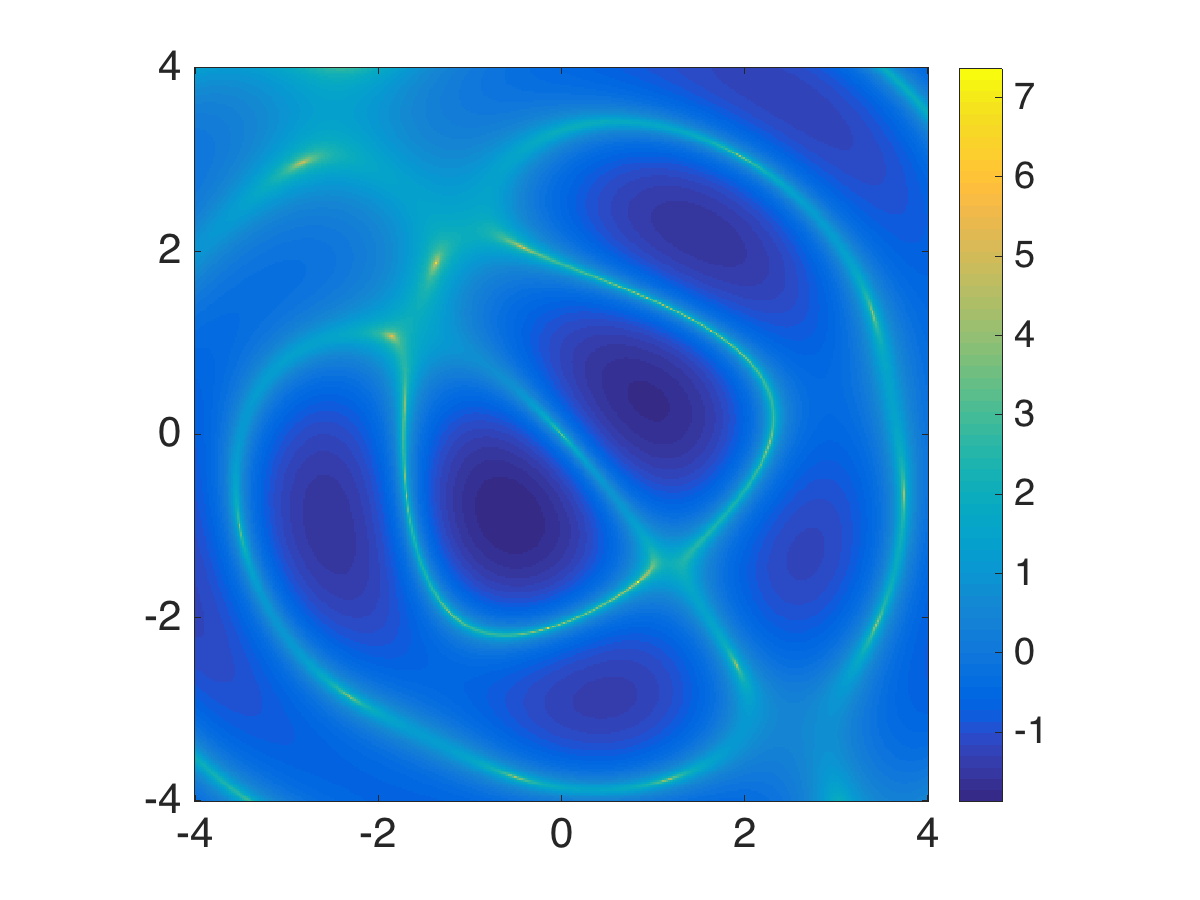}}\hfill
\hfill\subfigure[$k_3=2.65$]{\includegraphics[width=0.33\textwidth]
                   {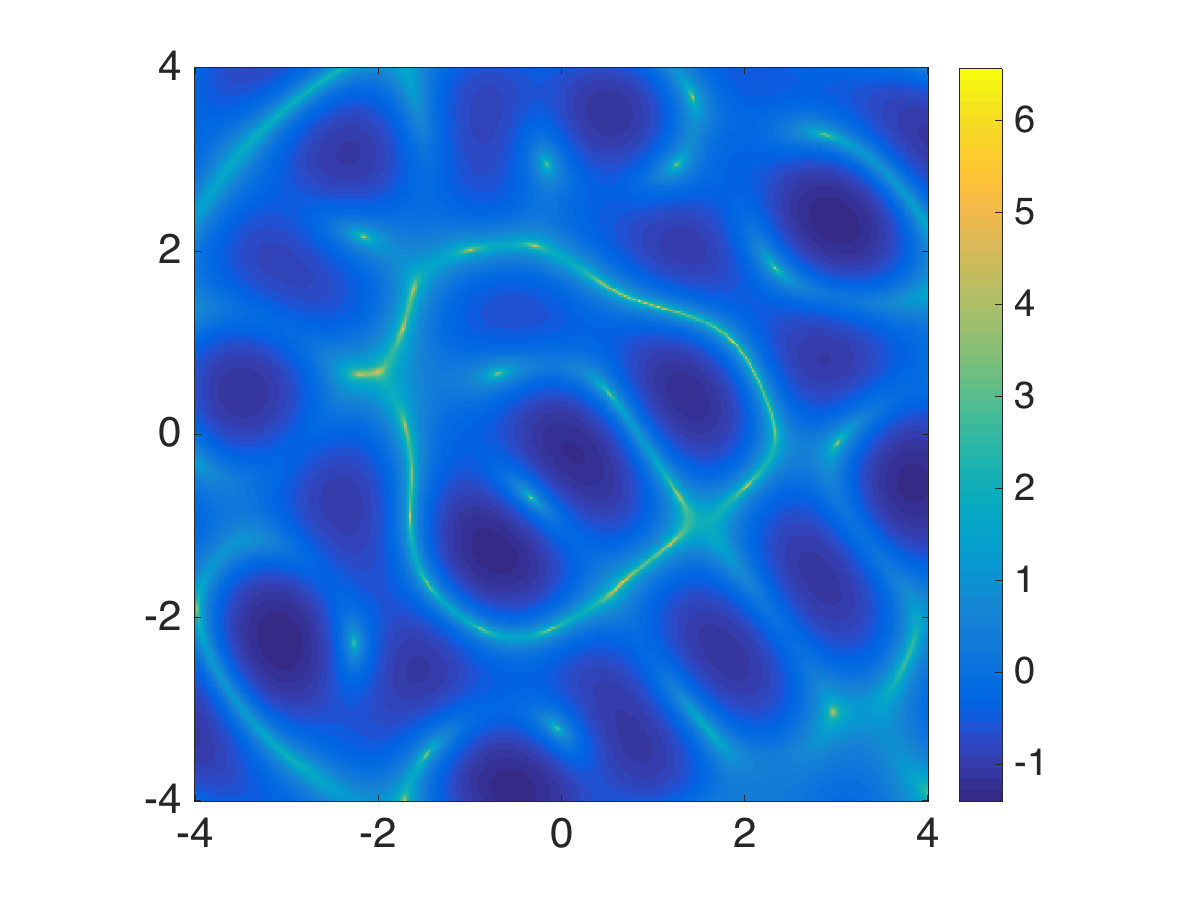}}\hfill\\
\hfill\subfigure[$k_1=1.23$]{\includegraphics[width=0.33\textwidth]
                   {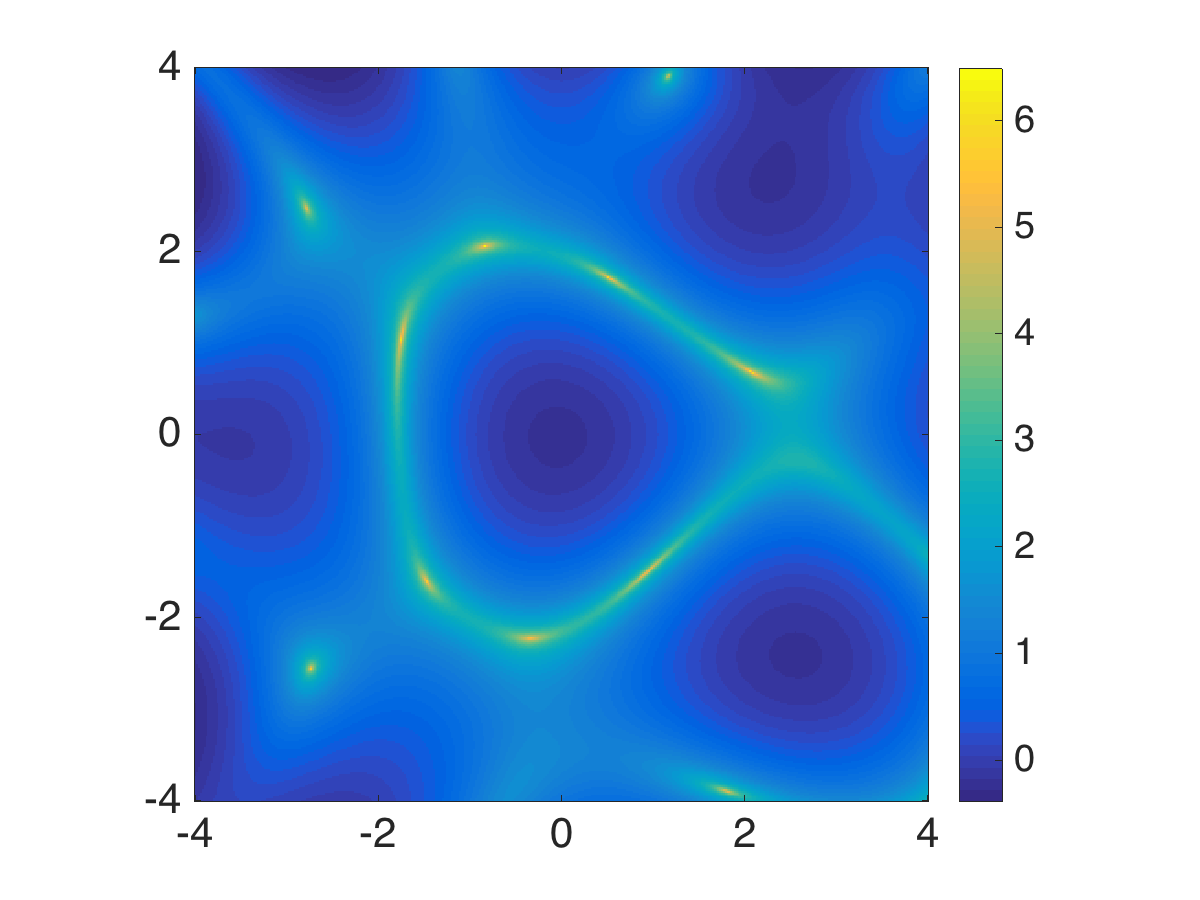}}\hfill
\hfill\subfigure[$k_2=1.92$]{\includegraphics[width=0.33\textwidth]
                   {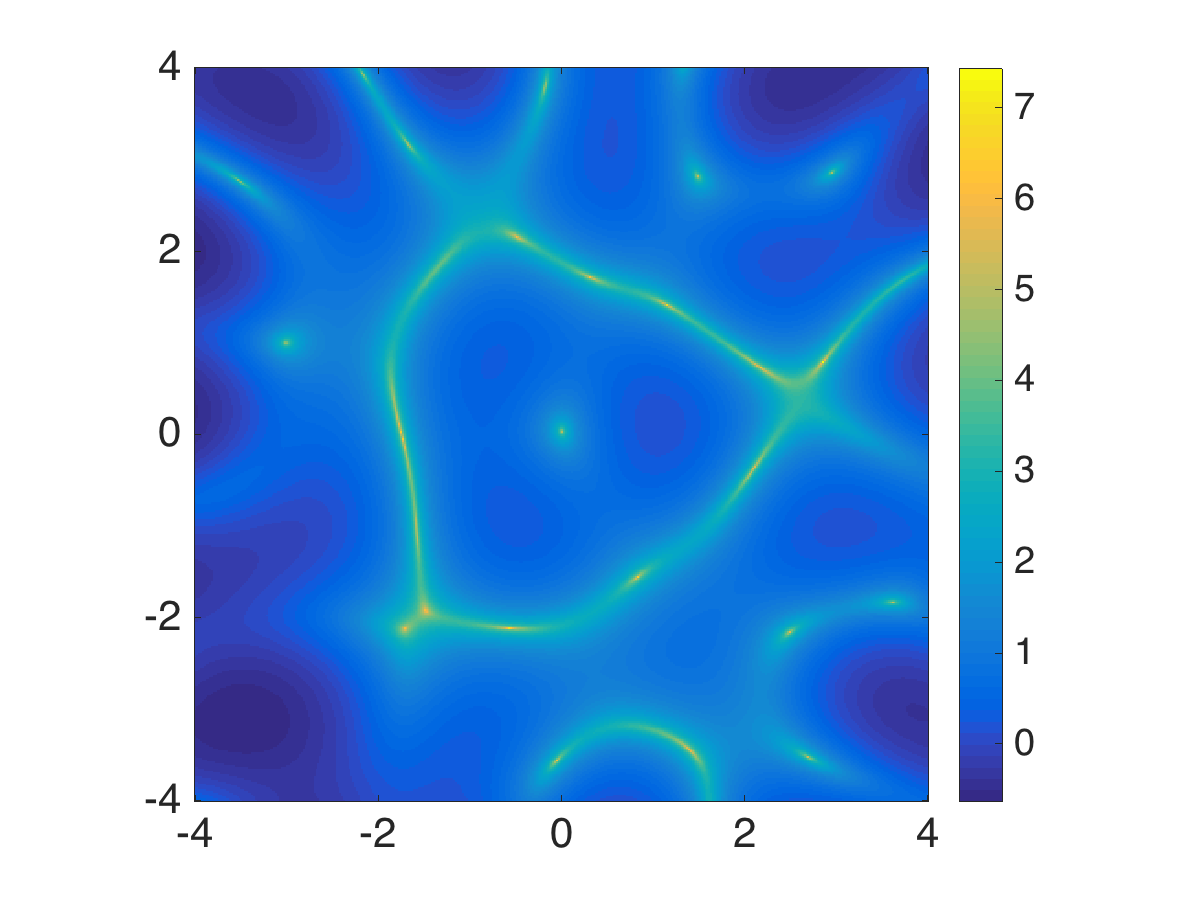}}\hfill
\hfill\subfigure[$k_3=2.65$]{\includegraphics[width=0.33\textwidth]
                   {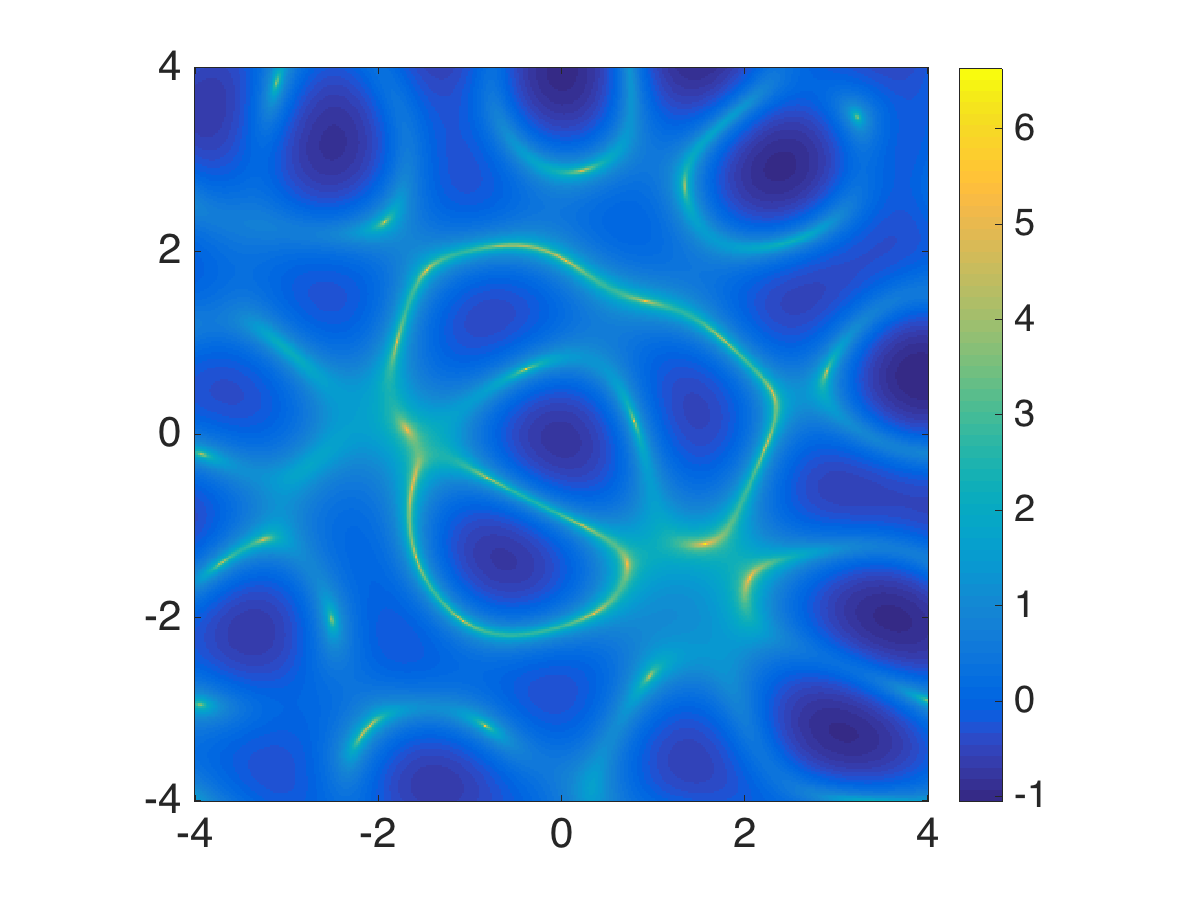}}\hfill\\
\caption{Reconstruct the Pear: single-frequency indicator function for the noisy far-field data and eigenvalues computed form the factorization method. (a)--(c): FTLS method; (d)--(f): GTLS method.}
\label{fig:pear_Indicator1}
\end{figure}
Next we test the scheme with noisy far-field data and eigenvalues computed from those data through the factorization method. Note that there are two types of data noise that are linked to each other: the noise in the far-field data leads to error in the eigenvalues, and the far-field data associated with those eigenvalues are further perturbed with random noise.

With $5\%$ relative noise in the far-field data, we obtain the eigenvalues as listed in Table \ref{tab:pear_eigenvalues}. Figure \ref{fig:pear_Indicator1} shows the indicator functions for the first three eigenvalues ($k_3$ corresponds to the $4$-th exact eigenvalue) and the associated far-field data. Due to the presense of data noise, we use smaller cut-off frequencies $N=4,6,8$ for the FTLS method and a larger regularization parameter $\alpha=10^{-2}$ for the GTLS method. Clearly the indicator function for single frequency data is sensitive to the data noise. 

\subsubsection{Multi-frequency reconstruction}
From the previous numerical experiments we see the indicator functions have nodal lines not only tracing along the boundary of the domain but also extending to the interior and exterior of the domain. However, the nodal lines in the interior and exterior appear in different locations and shapes for different eigenvalues. Hence the boundary of the domain will stand out if we superimpose the indicator functions for multi-frequency data. Among the many choices for the method of superposition, we find the indicator function defined in \eqref{Iresonant_multi} yields the best results in most experiments.

\begin{figure}[t]
\hfill\subfigure[$L=2$]{\includegraphics[width=0.33\textwidth]
                   {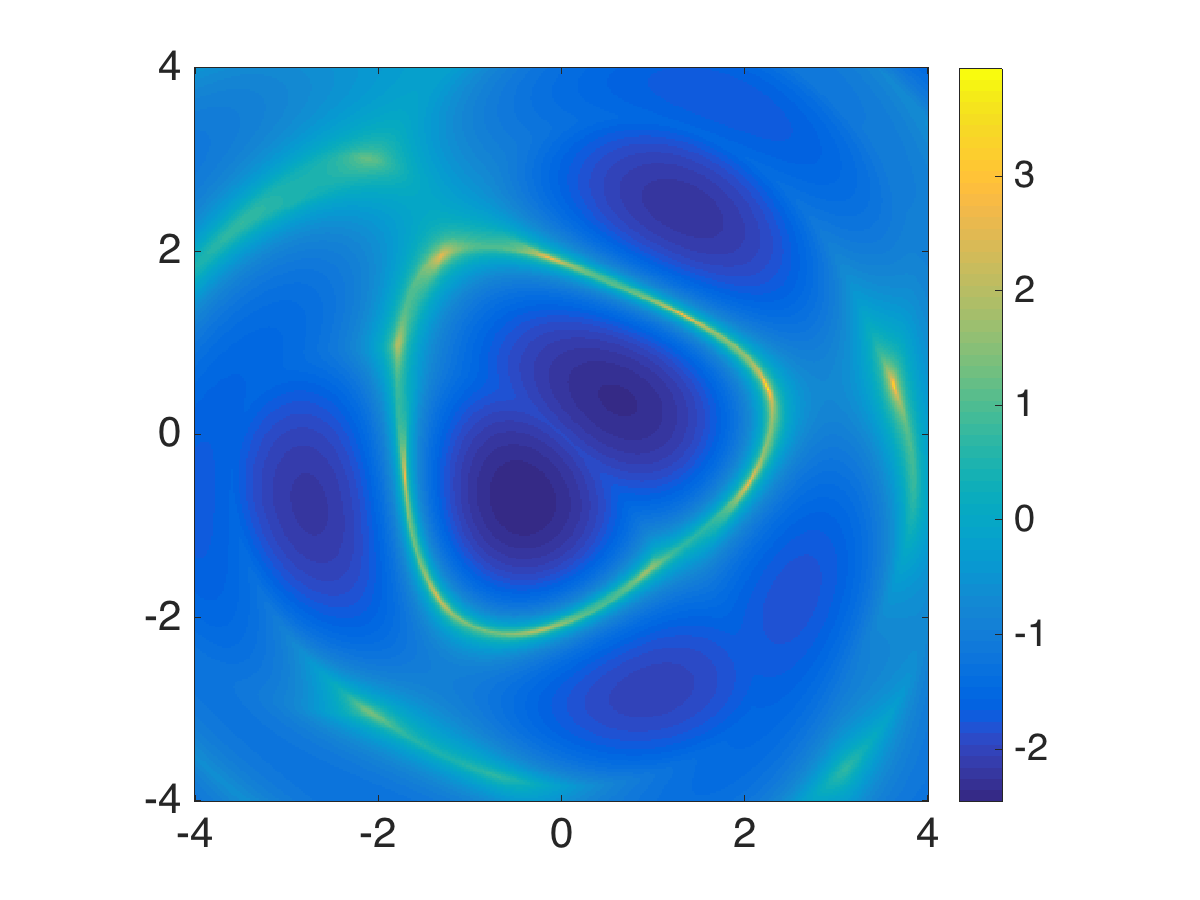}}\hfill
\hfill\subfigure[$L=5$]{\includegraphics[width=0.33\textwidth]
                   {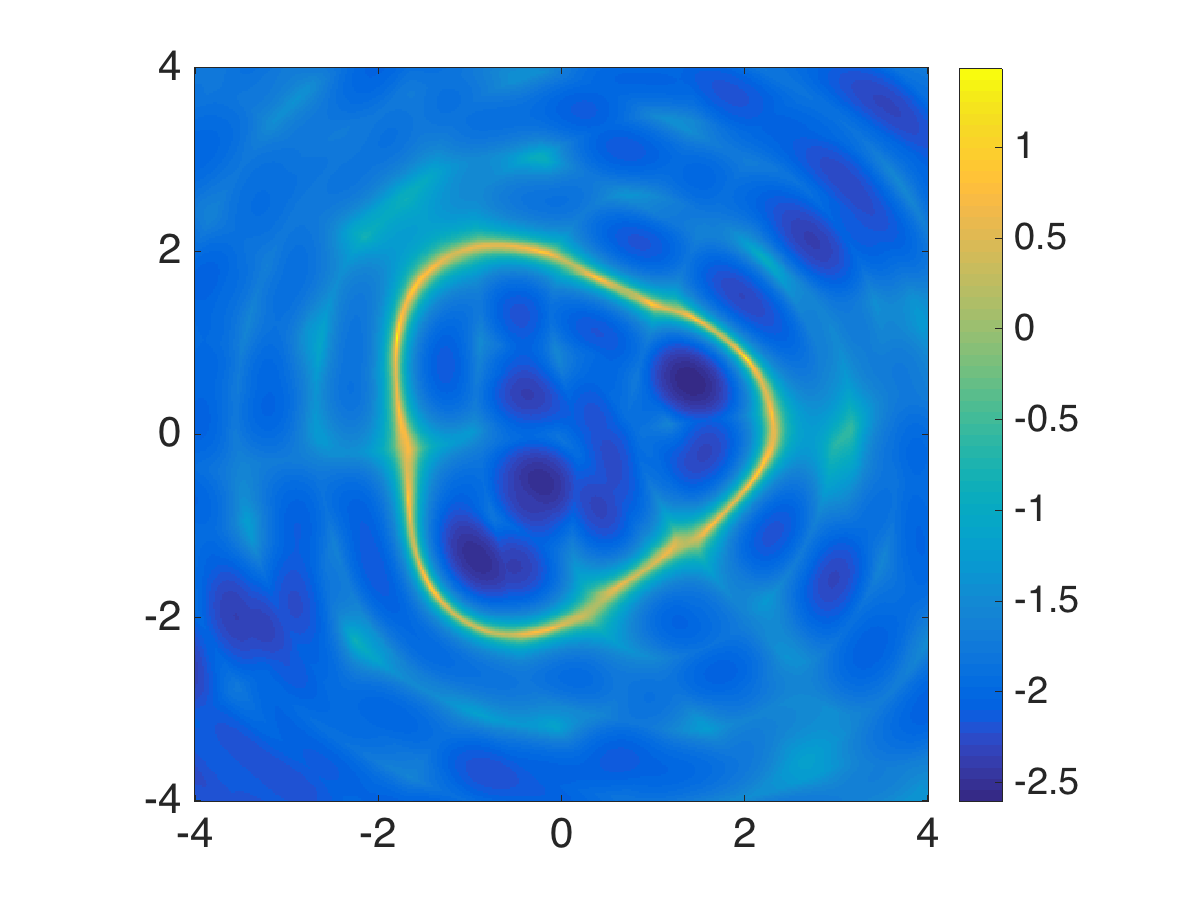}}\hfill
\hfill\subfigure[$L=10$]{\includegraphics[width=0.33\textwidth]
                   {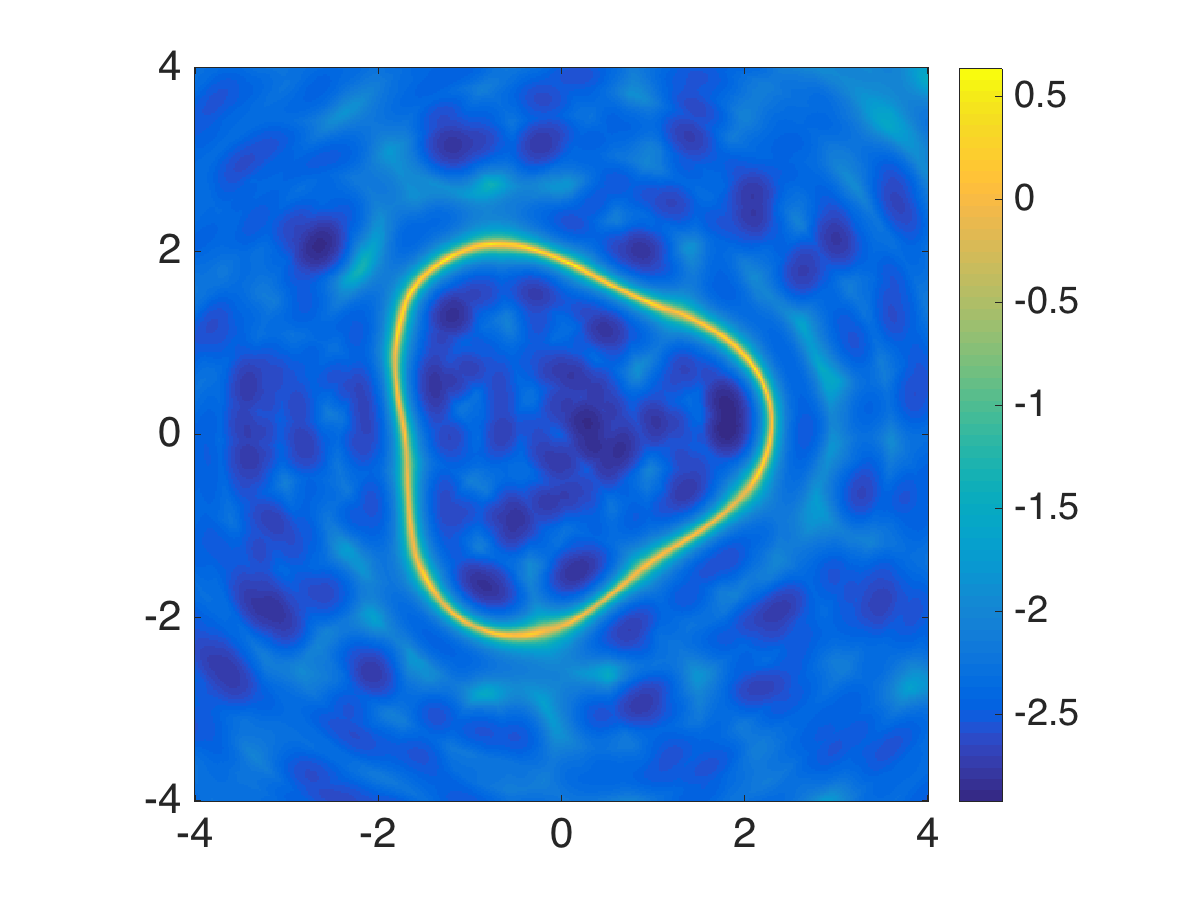}}\hfill\\
\hfill\subfigure[$L=2$]{\includegraphics[width=0.33\textwidth]
                   {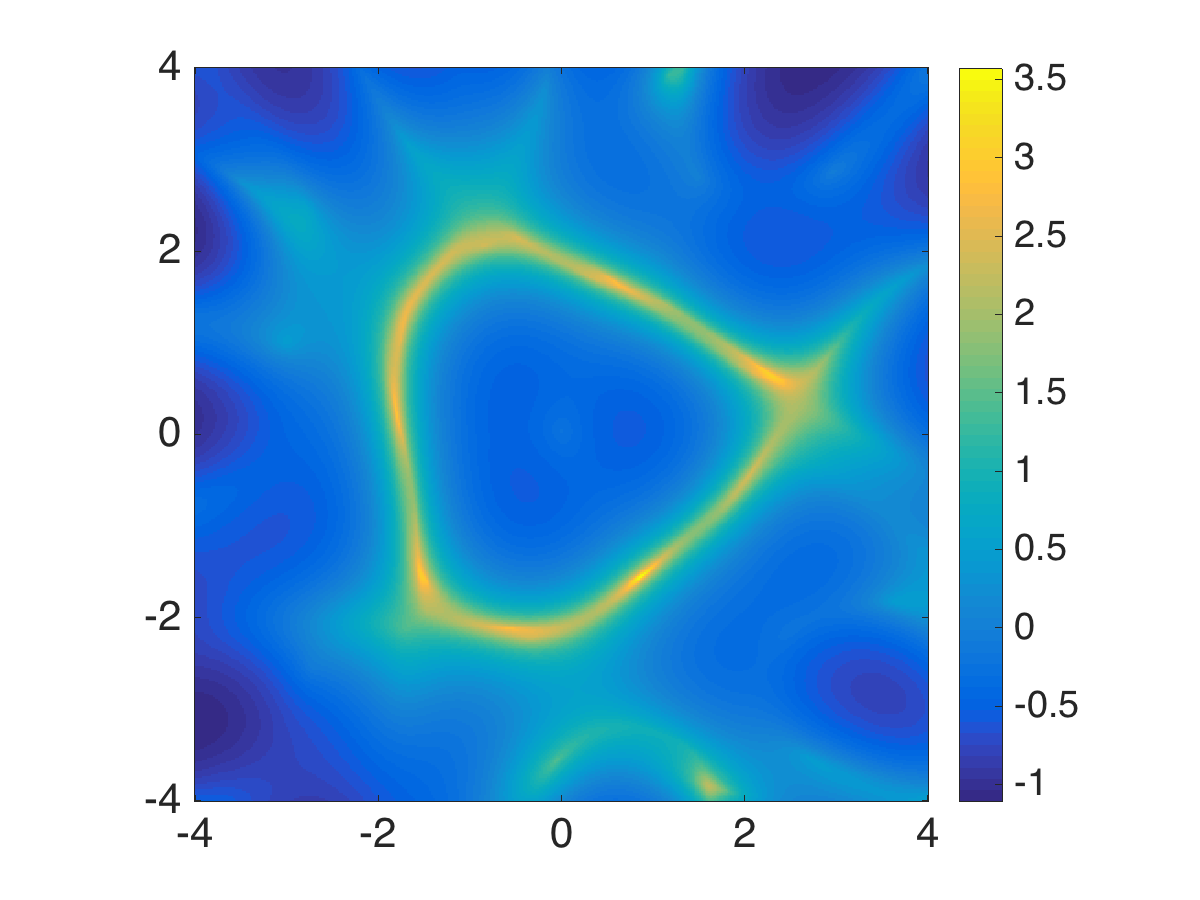}}\hfill
\hfill\subfigure[$L=5$]{\includegraphics[width=0.33\textwidth]
                   {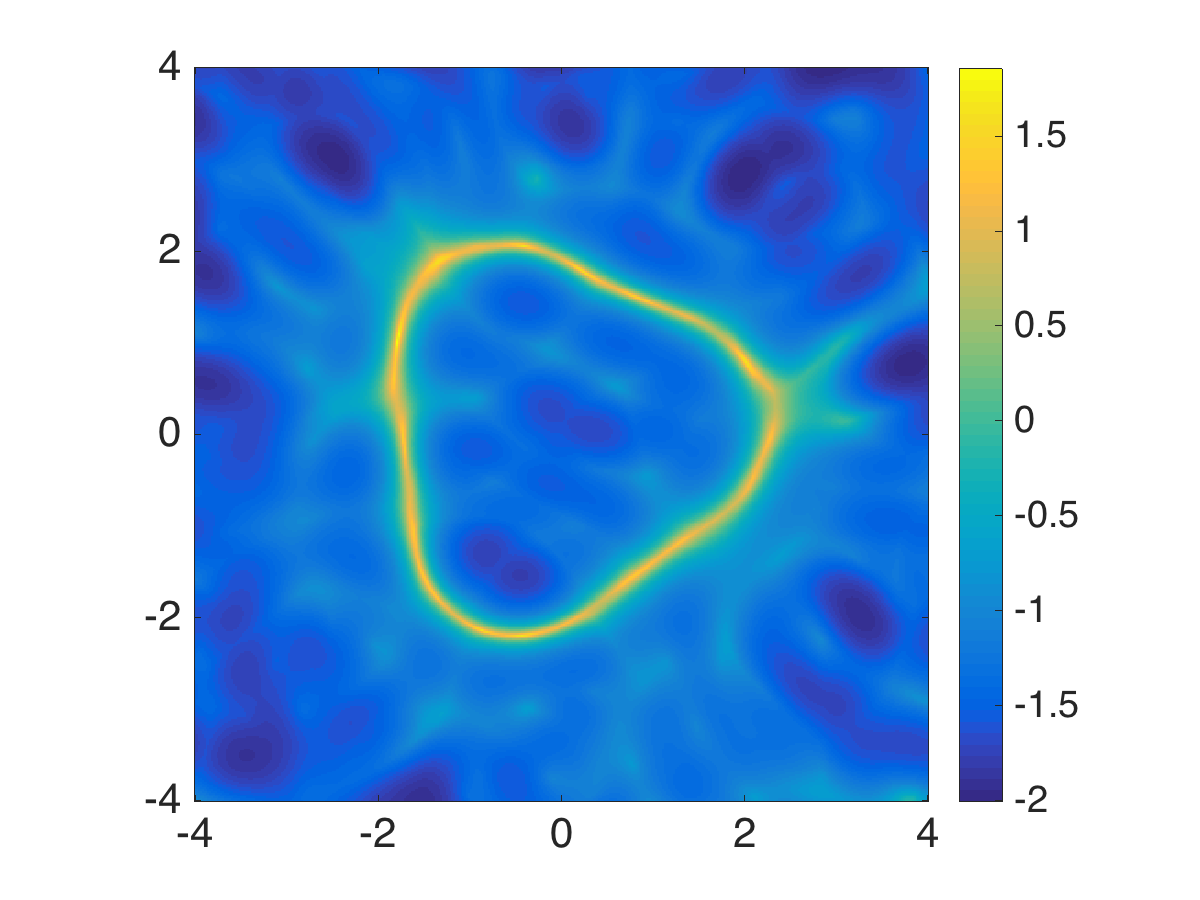}}\hfill
\hfill\subfigure[$L=10$]{\includegraphics[width=0.33\textwidth]
                   {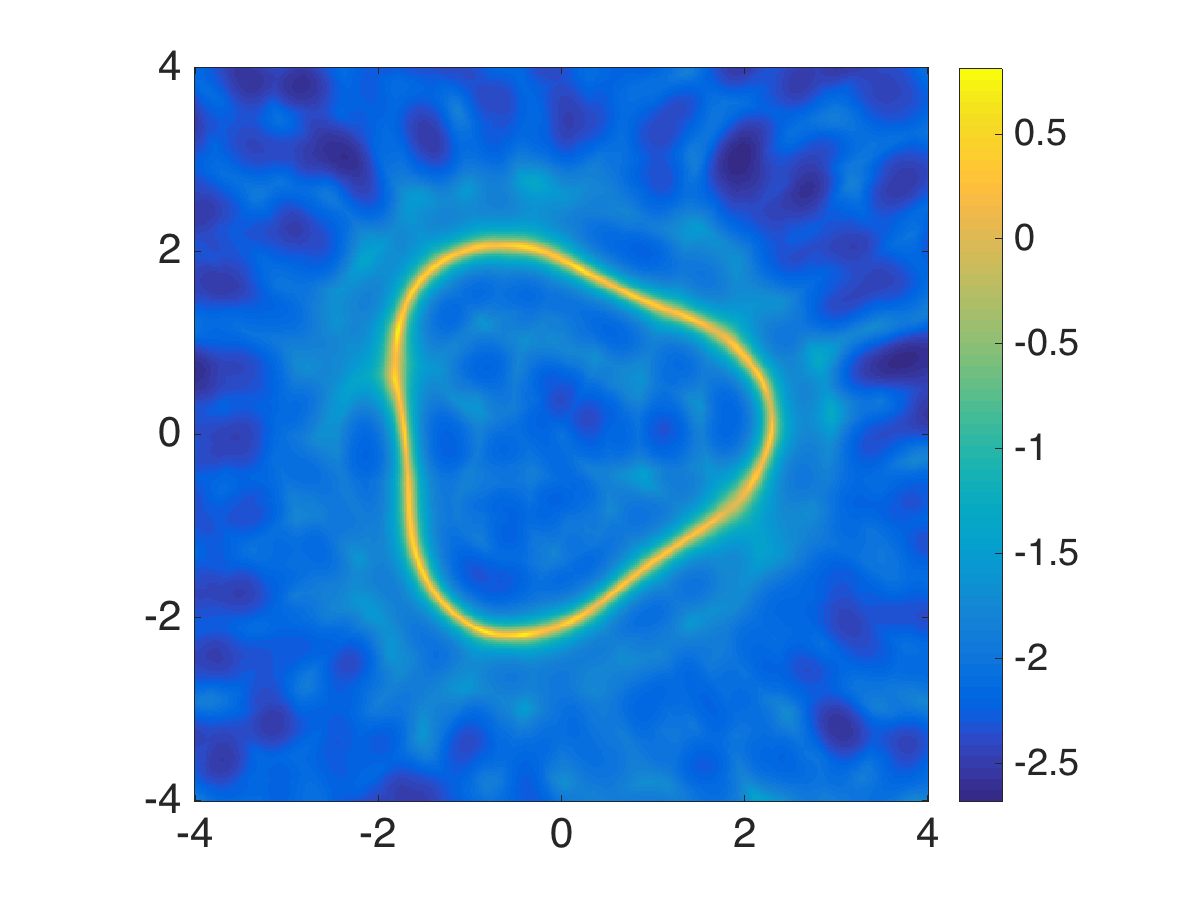}}\hfill\\
\caption{Reconstruct the Pear: multi-frequency indicator function with the first $L$ eigenvalues. (a)--(c): FTLS method; (d)--(f): GTLS method.}
\label{fig:pear_Indicator2}
\end{figure}
Using the same far-field data and eigenvalues as the previous test, we obtain the results in Figure \ref{fig:pear_Indicator2}, which shows the multi-frequency indicator function \eqref{Iresonant_multi} with the first $2,5$ and $10$ eigenvalues, respectively. The cut-off frequency is $N=4,8,12$ for the FTLS method and the regularization parameter is $\alpha=10^{-2}$ for the GTLS method. Compared with single-frequency data, multi-frequency data yields reconstructions with progressively sharper and more accurate boundary as the number of frequencies increases.

\subsubsection{3D reconstruction}
Finally we consider the more challenging problem of reconstructing 3D shapes. Let $\Omega$ be the 3D-Kite as shown in Figure \ref{fig:Geometry}. Using the multi-frequency indicator function with the first 10 eigenvalues (computed from the FEM method) and the associated far-field data perturbed with $1\%$ random noise, we obtain the results in Figure \ref{fig:Kite_Fourier_Indicator2} by using the FTLS method with cut-off frequency $N=40$ and the results in Figure \ref{fig:Kite_Grad_Indicator2} by using the GTLS method with regularization parameter $\alpha=10^{-4}$. In the isosurface plots, the gray shadows are projections of the isosurface on each coordinate plane and the red dashed lines are the projections of the boundary of the exact shape. In the slice-view plots, the red dashed lined are the intersection of the planes and the boundary of the exact shape. Clearly both methods produce accurate reconstructions of the shape including the concave part. 

\begin{figure}[t]
\hfill\subfigure[isovalue $2.5$]{\includegraphics[width=0.25\textwidth]
                   {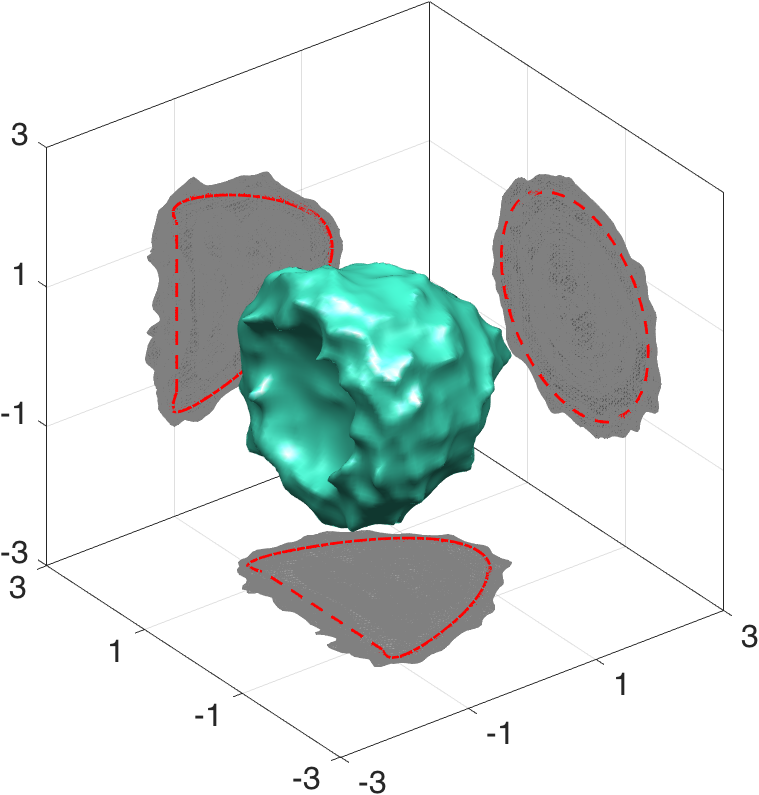}}\hfill
\hfill\subfigure[isovalue $3.0$  ]{\includegraphics[width=0.25\textwidth]
                   {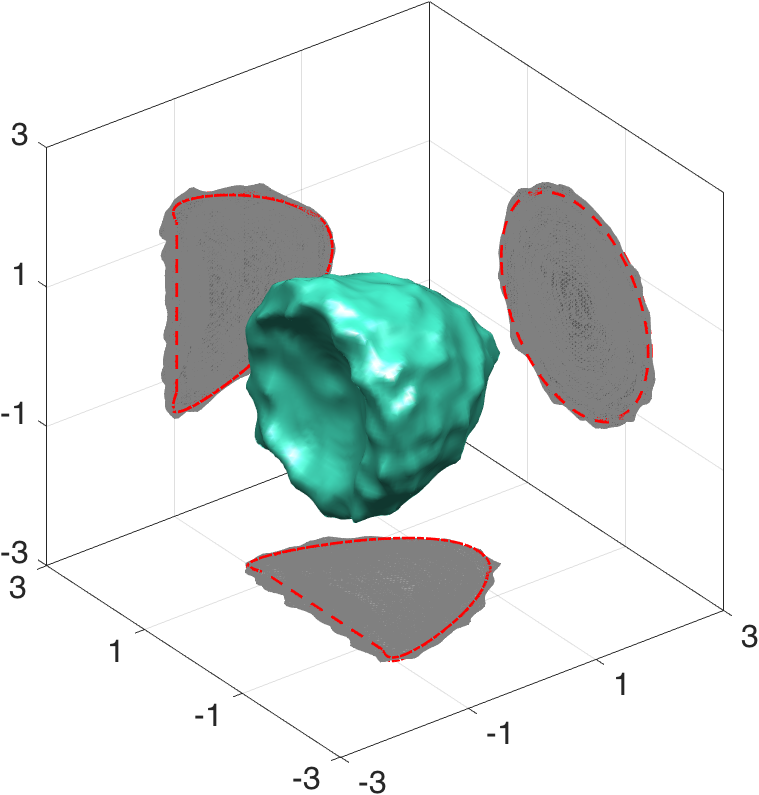}}\hfill
\hfill\subfigure[isovalue $3.5$]{\includegraphics[width=0.25\textwidth]
                   {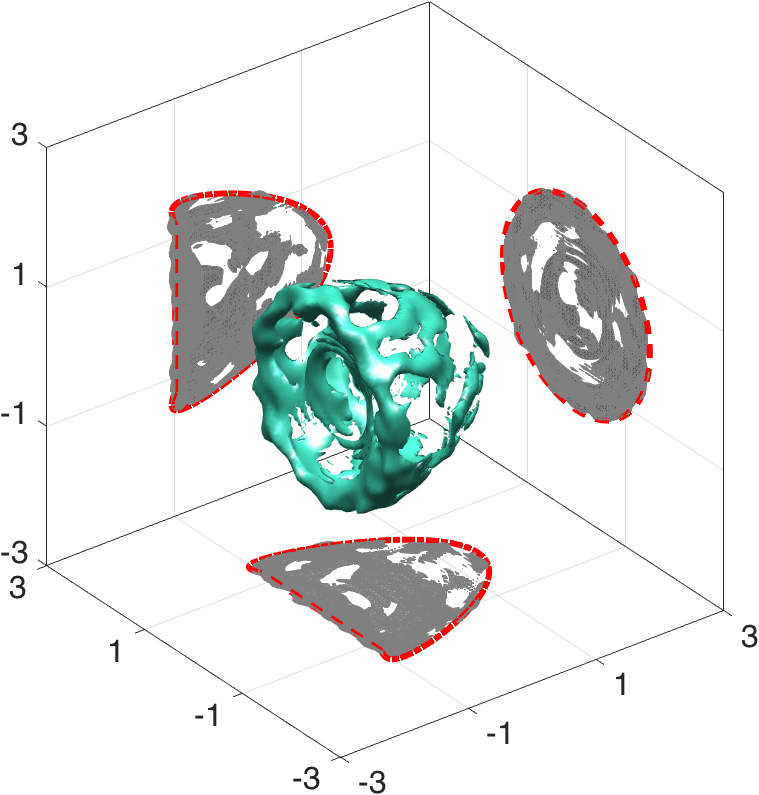}}\hfill\\
\hfill\subfigure[plane $x=0$]{\includegraphics[width=0.25\textwidth]
                   {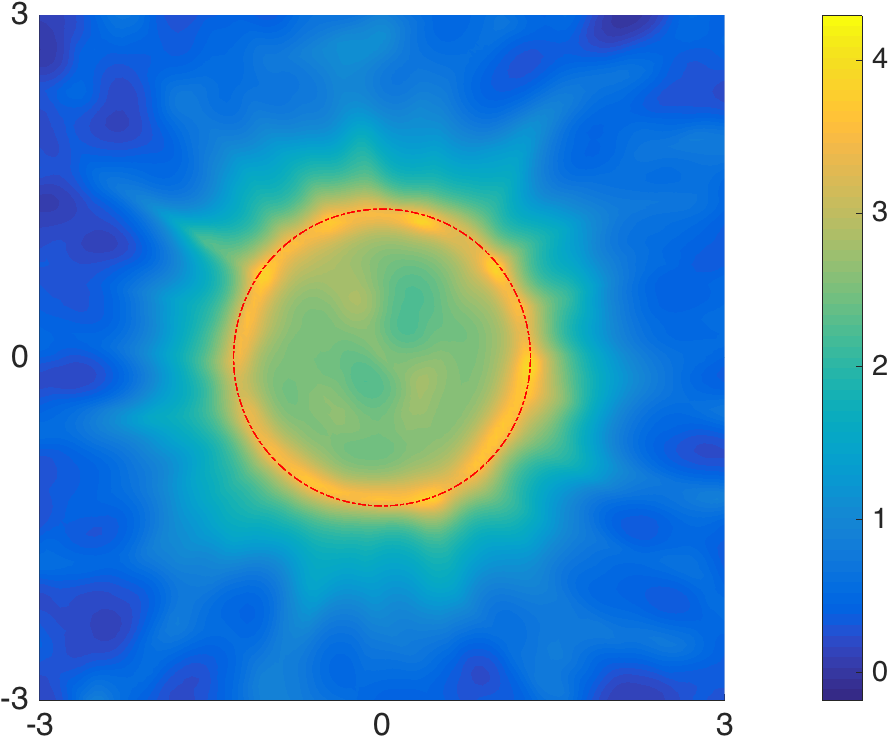}}\hfill
\hfill\subfigure[plane $y=0$]{\includegraphics[width=0.25\textwidth]
                   {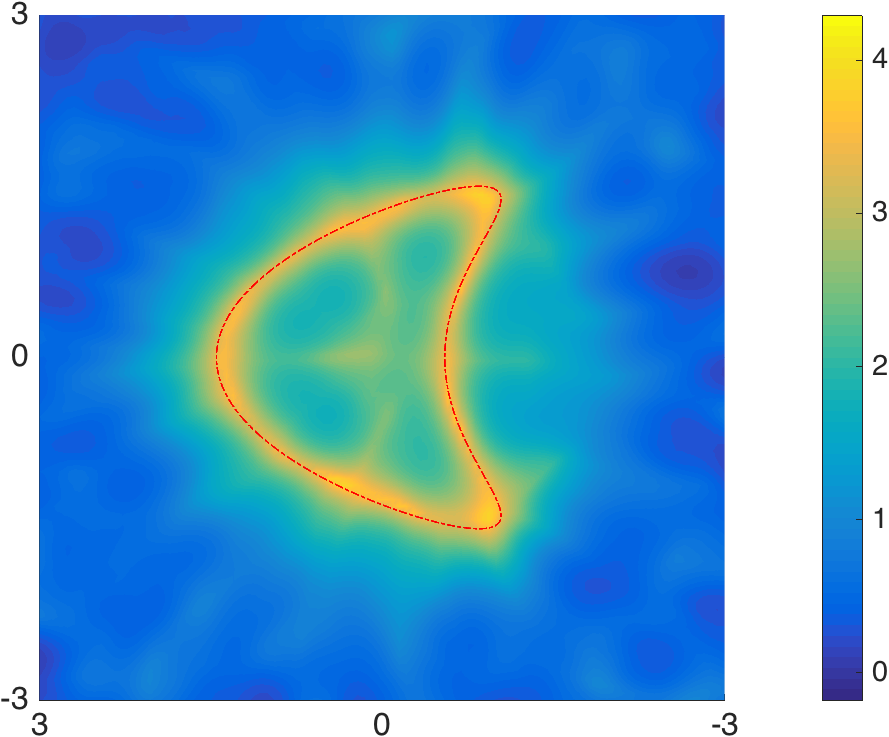}}\hfill
\hfill\subfigure[plane $z=0$]{\includegraphics[width=0.25\textwidth]
                   {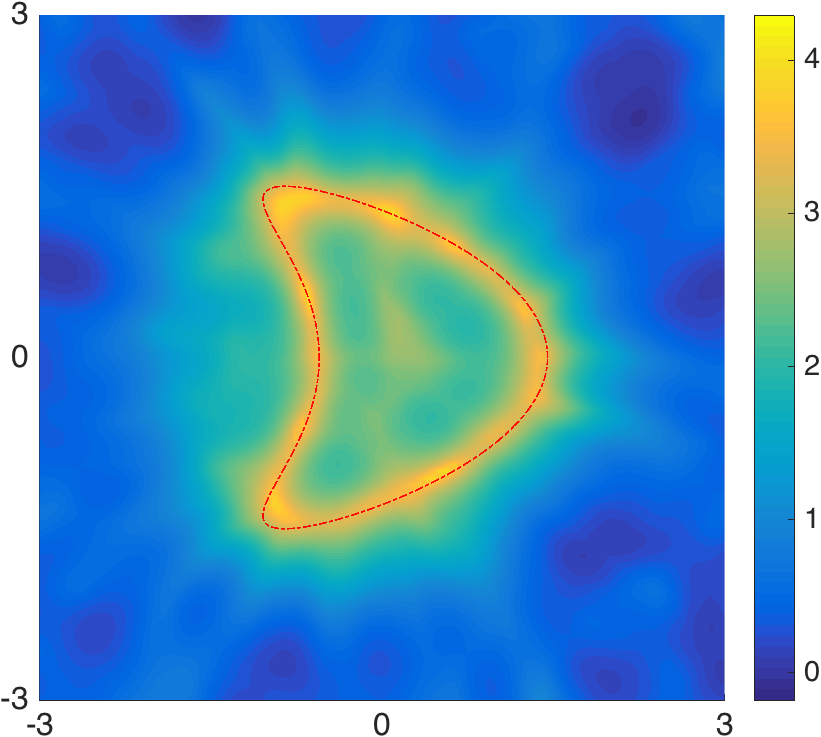}}\hfill\\
\caption{Reconstruct the 3D-Kite: multi-frequency indicator function using the FTLS method. (a)--(c): isosurface; (d)--(f): slice-view.}
\label{fig:Kite_Fourier_Indicator2}
\end{figure}

\begin{figure}[t]
\hfill\subfigure[isovalue $2.5$]{\includegraphics[width=0.25\textwidth]
                   {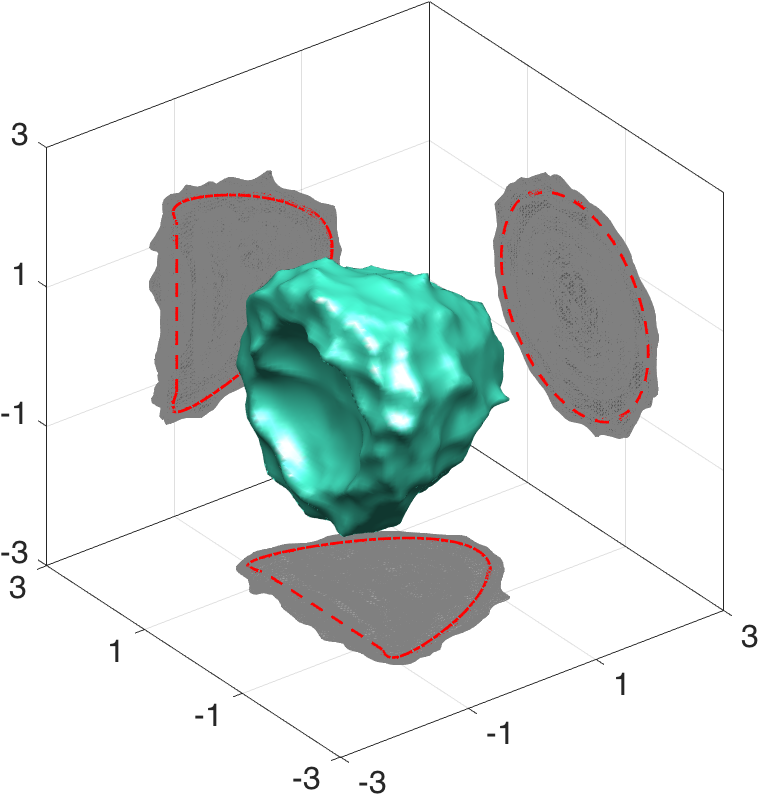}}\hfill
\hfill\subfigure[isovalue $3.0$  ]{\includegraphics[width=0.25\textwidth]
                   {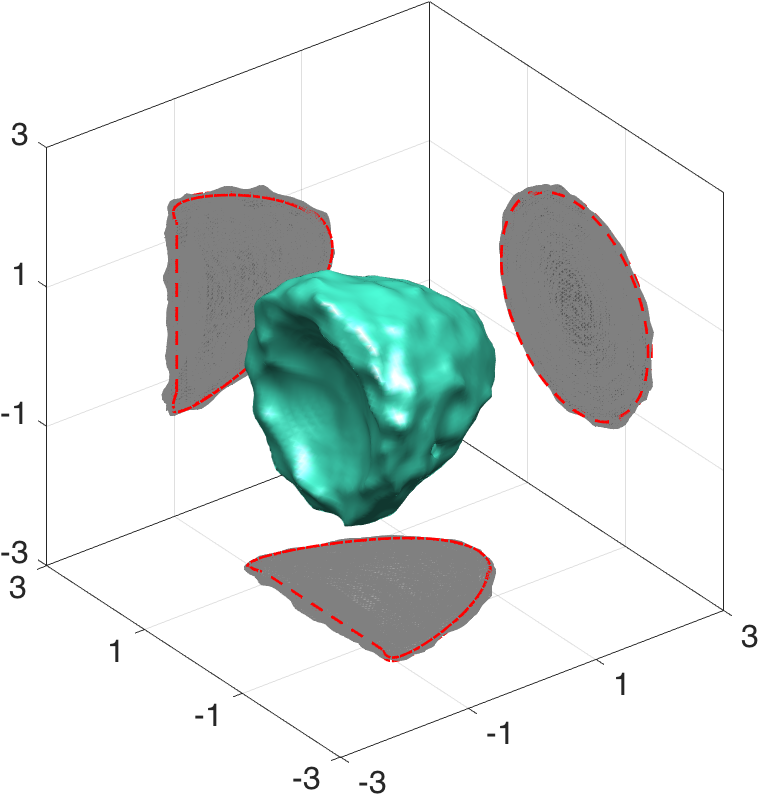}}\hfill
\hfill\subfigure[isovalue $3.5$]{\includegraphics[width=0.25\textwidth]
                   {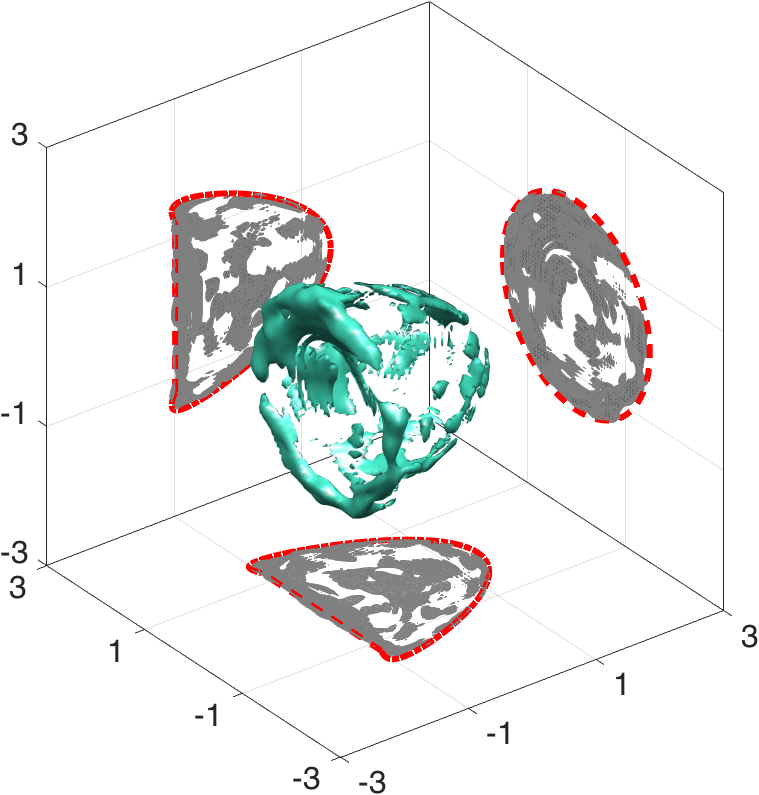}}\hfill\\
\hfill\subfigure[plane $x=0$]{\includegraphics[width=0.25\textwidth]
                   {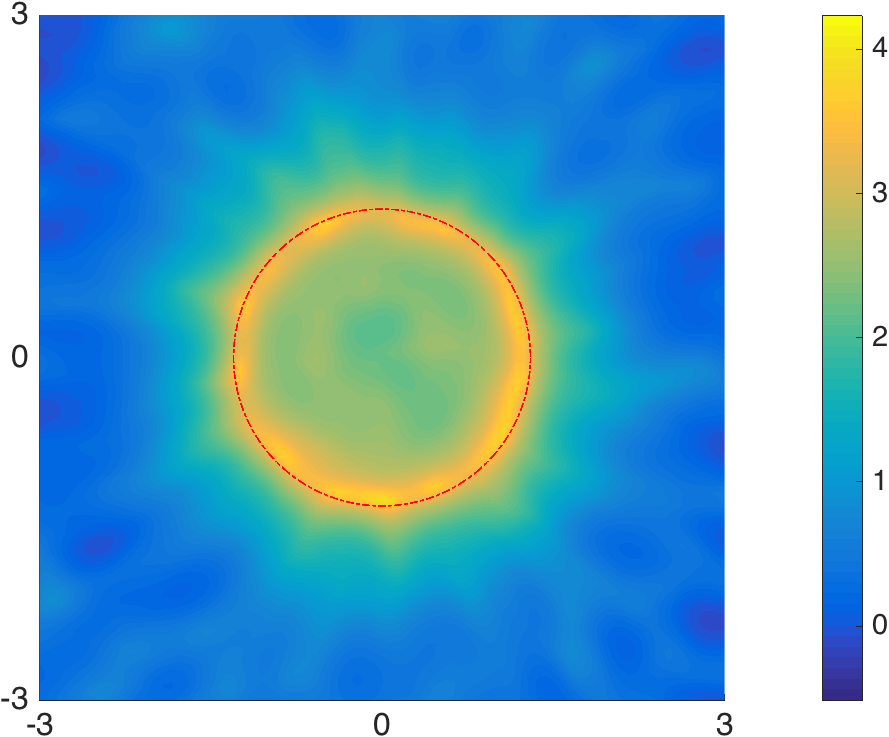}}\hfill
\hfill\subfigure[plane $y=0$]{\includegraphics[width=0.25\textwidth]
                   {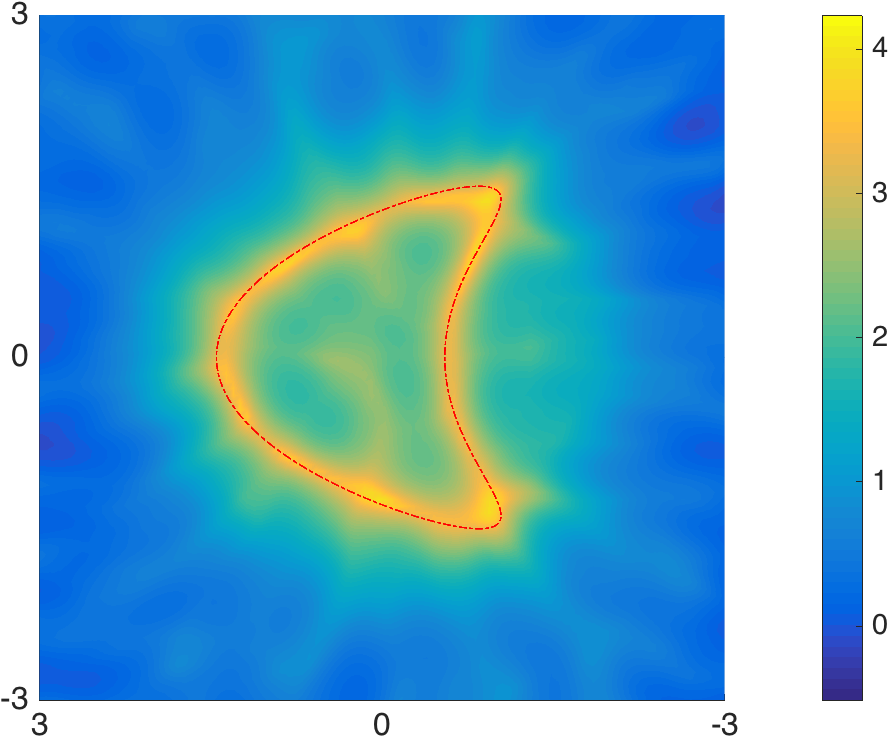}}\hfill
\hfill\subfigure[plane $z=0$]{\includegraphics[width=0.25\textwidth]
                   {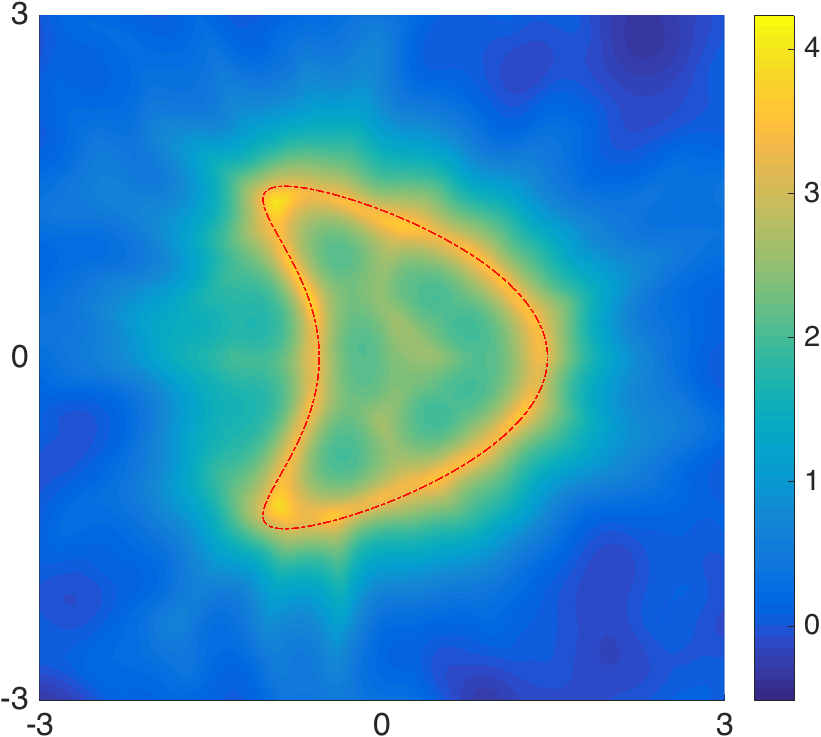}}\hfill\\
\caption{Reconstruct the 3D-Kite: multi-frequency indicator function using the GTLS method. (a)--(c): isosurface; (d)--(f): slice-view.}
\label{fig:Kite_Grad_Indicator2}
\end{figure}

\section*{Acknowledgment}
 The work of Hongyu Liu was supported by the FRG and startup grants from Hong Kong Baptist University, Hong Kong RGC General Research Funds, 12302415 and 12302017.
The research of Xiaodong Liu was supported by the Youth Innovation Promotion Association CAS and the NNSF of China under grant 11571355. The research of Yuliang Wang was supported by the Hong Kong RGC General Research Fund (No. 12328516), National Natural Science Foundation of China (No. 11601459) and Faculty Research Grant of Hong Kong Baptist University (FRG2/17-18/036). 

\clearpage

\end{document}